\tikzset{->-/.style={decoration={
  markings,
  mark=at position .525 with {\arrow{Straight Barb}}},postaction={decorate}}}
\tikzset{->>-/.style={decoration={
  markings,
  mark=at position 0.5 with {\arrow{Straight Barb}},
  mark=at position 0.6 with {\arrow{Straight Barb}}},postaction={decorate}}}
\newcommand{\C}{\mathbb{C}} 
\newcommand{\N}{\mathbb{N}}
\newcommand{\Z}{{\mathbb Z}}
\renewcommand{\S}{\mathscr{S}}
\renewcommand{\phi}{\varphi}
\newcommand{\Per}{\text{Per}}
\newcommand{\interior}{\text{int}}
\newcommand{\parens}[1]{\left( #1 \right)}
\newcommand{\word}[1]{\mathtt{#1}}
\newcommand{\lc}{\text{lc}}
\newcommand{\lcu}{\text{lcu}}
\newcommand{\lcs}{\text{lcs}}
\newcommand{\sync}{\text{sync}}
\theoremstyle{plain}
    \newtheorem{theorem}{Theorem}[section]
    \newtheorem{lemma}[theorem]{Lemma}
    \newtheorem{corollary}[theorem]{Corollary}
    \newtheorem{proposition}[theorem]{Proposition}
    \newtheorem{prop}[theorem]{Proposition}
    \newtheorem{conjecture}[theorem]{Conjecture}
\theoremstyle{definition}
    \newtheorem{definition}[theorem]{Definition}
    \newtheorem{example}[theorem]{Example}
    \newtheorem{remark}[theorem]{Remark}
\theoremstyle{remark}
\begin{document}

\title[Finitely presented systems and Ruelle algebras]{Synchronizing Dynamical Systems: Finitely presented systems and Ruelle algebras}
\author{Robin J. Deeley}
\address{Robin J. Deeley,   Department of Mathematics,
University of Colorado Boulder
Campus Box 395,
Boulder, CO 80309-0395, USA }
\email{robin.deeley@colorado.edu}
\author{Andrew M. Stocker}
\address{Andrew M. Stocker,   Department of Mathematics,
University of Colorado Boulder
Campus Box 395,
Boulder, CO 80309-0395, USA }
\email{andrew.stocker@colorado.edu}
\subjclass[2010]{46L35, 37D20}
\thanks{This work was partially supported by the National Science Foundation under Grants No. DMS 2000057 and 2247424 and Simons Foundation Gift MP-TSM-00002896.}
\begin{abstract}
The main goals of the present paper are to determine the structure of the $C^\ast$-algebras associated to a finitely presented system and to develop the basic theory of the Ruelle algebras associated to a general synchronizing system. The later is related to the former in the sense that we show that Ruelle algebras associated to a finitely presented system are explicitly related to the Smale space case. The relevant $C^\ast$-algebras are the synchronizing heteroclinic algebras that were introduced in our previous work on synchronizing systems. They are very much related to previous work of Thomsen, who in turn was building on work of Ruelle, Putnam, and Spielberg. 
\end{abstract}

\maketitle

\section*{Introduction}
This is the third paper in a series on the structure of $C^\ast$-algebras associated to synchronizing systems. In the first paper of the series \cite{DeeSto}, the definition and fundamental properties of synchronizing systems and their $C^\ast$-algebras were discussed. In the second \cite{DeeStoShift}, the special case of synchronizing shifts was developed in general and through many examples. Those papers, like the current one, very much build on work of Thomsen, Ruelle, Putnam, and Spielberg. Throughout the paper $X$ denotes a compact metric space, $\varphi: X \rightarrow X$ is a homeomorphism, and the pair $(X, \varphi)$ is called a dynamical system. 

The present paper deals with the structure of the $C^\ast$-algebras associated to a finitely presented system and the Ruelle algebras for general synchronizing systems. Finite presented systems were introduced by Fried in \cite{fried1987}. By definition a finitely presented system is an expansive systems that is a factor of a shift of finite type (see Definition \ref{def:finPreSys} below and \cite{fried1987} for more details). Finitely presented systems are a natural generalization of Smale spaces. A Smale space is a dynamical system with a uniformly hyperbolic structure that is encoded using a map called the bracket map. They were introduced by Ruelle \cite{ruelle_2004}. The bracket map gives every point in the space a local product structure (for the precise definition of a Smale space, see Definition \ref{SmaSpaDef} below). A further generalization (of both Smale spaces and finitely presented systems) are synchronizing systems. A synchronizing system like a finitely presented system is not uniformly hyperbolic, but there is an open dense set of points that has a local product structure. Unlike finitely presented systems there is no assumption on the existence of a factor map from a shift of finite type. In summary, we have the following implications:
\[ (X,\varphi) \text{ is a Smale space } \implies (X,\varphi) \text{ is finitely presented } \implies (X,\varphi) \text{ is synchronizing } \] 
For the reader familiar with shift spaces, we note that a shift is a Smale space if and only if it is of finite type and a shift is finitely presented if and only if it is sofic. The interested reader can see \cite{DeeStoShift} (among other sources) for many examples of shifts that are synchronizing, but not sofic. As such, the reverse of the implications above do not hold. 

It is therefore of interest to determine what results generalizes from the Smale space case to the finitely presented case and in turn what results generalize from the finitely presented case to the synchronizing case. We are particularly interested in results concerning the structure of $C^*$-algebras associated to these dynamical systems.

A summary of the results of the paper will now be discussed. Although it is known that the theory of the Ruelle algebras generalizes from the Smale space case (see for example \cite[Introduction]{thomsen2010c}) it has never been developed in detail. As such, we introduce the Ruelle algebras for synchronizing systems. Here we follow the work of Putnam and Spielberg who considered the Smale space case in \cite{putnam99} along with following the work of Thomsen in \cite{thomsen2010c}. 

Next, we restrict our attention to finitely presented systems. Todd Fisher \cite{fisher13} has proved that an irreducible finitely presented system has two covers by Smale spaces where the factor maps are particularly nice. For one cover it is $s$-resolving, for the other it is $u$-resolving and both are minimal in a natural sense that we will discuss in below, see in particular Definition \ref{def:Ures} and Theorem \ref{theorem:fp-factor-of-smale}; the interested reader can also find more details and context in the introduction of \cite{fisher13}. 

We prove that the $C^\ast$-algebras associated to a finitely presented system are naturally related to the ones associated to these Smale spaces that cover it in this particularly nice way. The precise results occurs at the groupoid level and is the main result of the paper, see Theorem \ref{thm:main}. The statement of this theorem is involved so we do not state it here in the introduction. Needless to say, the algebras associated to a finitely presented systems have very much the same structure as the Smale space case and we can transfer results and $K$-theory computations from the Smale space case to finitely presented case in a more or less direct way. In particular, a lot of work has been done on computing the $K$-theory of Smale space $C^*$-algebras, see for example \cite{DeeYas_2020, ProYam_2022, ProYam_2023, ProYam_preprint3, ProYam_preprint4} and references therein.

On the other hand, many of the interesting results about Smale space $C^\ast$-algebras are based on the interactions between these algebras. A prototypical example is the follows: 
\begin{conjecture}
Suppose that $(X, \varphi)$ is a mixing Smale space and $S$ and $U$ denote its stable and unstable algebras respectively. Then 
\[
{\rm rank}(K_*(S))={\rm rank}(K_*(U)).
\]
\end{conjecture}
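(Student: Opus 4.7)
The plan is to combine Poincar\'e duality for the Ruelle algebras with the Pimsner--Voiculescu six-term exact sequence. Recall that the Ruelle algebras arise as crossed products $R_s \cong S \rtimes_{\varphi_*} \Z$ and $R_u \cong U \rtimes_{\varphi_*} \Z$. By the work of Kaminker--Putnam--Whittaker, for a mixing Smale space $R_s$ and $R_u$ are Poincar\'e dual in $KK$-theory, yielding (up to a degree shift) an isomorphism $K_*(R_s) \cong K^*(R_u)$. Since the UCT of Rosenberg--Schochet forces $\rank(K^*(R_u)) = \rank(K_*(R_u))$, this produces the equality
\[ \rank(K_*(R_s)) = \rank(K_*(R_u)). \]

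The second step is to transfer this information to $S$ and $U$ via the PV sequence
\[ \cdots \to K_i(S) \xrightarrow{1-\varphi_*} K_i(S) \to K_i(R_s) \to K_{i-1}(S) \to \cdots . \]
Using the elementary identity $\rank(\ker(1-\alpha)) = \rank(\mathrm{coker}(1-\alpha))$ for any endomorphism $\alpha$ of a finitely generated abelian group, an exactness chase gives
\[ \rank(K_*(R_s)) = 2 \dim_\Q \ker(1 - \varphi_*)\big|_{K_*(S) \otimes \Q}, \]
and similarly for $U$. Thus Step~1 delivers equality of the dimensions of the $\varphi_*$-fixed rational subspaces of $K_*(S)$ and $K_*(U)$, but not yet of the full ranks. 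To upgrade, the plan is to replace $\varphi$ by $\varphi^n$: this leaves $S$ and $U$ unchanged as $C^*$-algebras but replaces $\varphi_*$ by $\varphi_*^n$. Running the above argument for every $n \geq 1$ shows that the root-of-unity eigenvalues of $\varphi_*$ on $K_*(S) \otimes \Q$ and on $K_*(U) \otimes \Q$ coincide with the same generalized-eigenspace multiplicities.

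The final and hardest step is to extend this coincidence to \emph{all} eigenvalues, so that the two $\Q$-vector spaces have equal rank. The most promising route is a Lefschetz-type trace formula identifying the characteristic polynomials of $\varphi_*$ on $K_*(S) \otimes \Q$ and on $K_*(U) \otimes \Q$ with a common dynamical invariant such as the Artin--Mazur zeta function of $(X,\varphi)$. For shifts of finite type this is essentially Krieger's description of $K_0(S)$ and $K_0(U)$ as inductive limits along $A$ and $A^{\mathsf{T}}$, where rank equality is immediate from the identity $\det(I - zA) = \det(I - zA^{\mathsf{T}})$. Producing such a trace formula in the general mixing Smale space case is the essential obstruction; without it, the argument only reduces the conjecture to a concrete statement about the non-root-of-unity spectrum of $\varphi_*$ on stable/unstable $K$-theory, but it does handle the class of mixing Smale spaces for which the action of $\varphi_*$ is quasi-unipotent.
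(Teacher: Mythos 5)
You should first note that the paper does not prove this statement at all: it is stated as a \emph{conjecture}, and the authors only remark that it is known for shifts of finite type and that they believe it for all mixing Smale spaces. So there is no ``paper proof'' to match; any complete argument here would be a genuinely new result, and your proposal does not supply one. Indeed, you concede this yourself in the final paragraph: the Lefschetz-type trace formula identifying the characteristic polynomials of $\varphi_*$ on $K_*(S)\otimes\Q$ and $K_*(U)\otimes\Q$ is exactly the missing ingredient, and it is not a routine technicality --- it is essentially the content of the conjecture. What your argument genuinely establishes (granting Kaminker--Putnam--Whittaker duality, the UCT, and the Pimsner--Voiculescu sequence) is that for every $n\geq 1$ the quantities $\dim_\Q\ker\bigl(1-\varphi_*^n\bigr)$ on $K_*(S)\otimes\Q$ and on $K_*(U)\otimes\Q$ agree, which by M\"obius inversion pins down the \emph{geometric} multiplicities of the root-of-unity eigenvalues on both sides. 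That is a nice reduction, but it leaves the entire non-root-of-unity spectrum, and hence the ranks, undetermined.

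Two further points need attention even within the partial argument. First, the identity $\rank\ker(1-\alpha)=\rank\operatorname{coker}(1-\alpha)$ and the eigenvalue bookkeeping require $K_*(S)$ and $K_*(U)$ to have finite rank; these groups are typically not finitely generated (already for the full $2$-shift one gets $\Z[1/2]$), and finite rank for arbitrary mixing Smale spaces is itself a nontrivial input you do not justify. Second, your closing claim that the argument ``handles'' the quasi-unipotent case is not correct as stated: $\dim\ker(1-\alpha^n)$ detects only geometric multiplicities, so a non-semisimple unipotent block (e.g.\ $\bigl(\begin{smallmatrix}1&1\\0&1\end{smallmatrix}\bigr)$) contributes $1$ to every $\ker(1-\alpha^n)$ while having rank $2$; you would additionally need $\varphi_*$ to act semisimply. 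In summary, the proposal is a reasonable reduction of the conjecture but not a proof, and it should be presented as such.
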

It is worth noting that this conjecture holds for all shifts of finite type and we believe that it will hold for all mixing Smale spaces, see \cite{ProYam_2022, ProYam_2023, ProYam_preprint3, ProYam_preprint4} for more details.

While it is natural to try to generalize this conjecture to the finitely presented case, it fails to do so. We gave an example in \cite{DeeStoShift}, but there are in fact many such examples. The reason that such examples are possible is the asymmetry in the minimal $s$-resolving and $u$-resolving covers of a finitely presented system.

The paper is organized as follows. The preliminaries section deals with expansive dynamical systems, including the notion of a local product structure for a point, metrics on expansive systems, and factor maps between such systems. In Section \ref{SecGroupoid}, we introduce the groupoids associated to a synchronizing system building on work of Thomsen (as well as Ruelle, Putnam, and Spielberg). From these groupoids one obtained $C^*$-algebras, which is the topic of Section \ref{c-star-algebras}. In Section \ref{sec:Ruelle}, the theory of Ruelle algebras for synchronizing systems is developed in detail. The main results of the paper, see Section \ref{sec:Main}, center on the groupoids and $C^*$-algebras of finitely presented systems. The most important result is Theorem \ref{thm:main}. A special case of this main result (namely, the case of sofic shifts) appeared in \cite{DeeStoShift}. The proof in the general case is much more involved. 

\section*{Acknowledgments}

The authors thank Ian Putnam for interesting and insightful discussions.

\section{Preliminaries}

\subsection{Expansive dynamical systems}
In our context, a dynamical system is a pair $(X, \varphi)$ where $X$ is a compact metric space and $\varphi: X \rightarrow X$ is a homeomorphism. We will typically assume that $X$ is infinite. If $x\in X$, then the orbit of $x$ is 
\[ \{ \varphi^n(x) \mid n \in \mathbb{Z} \}.  \]
The forward orbit of $x$ is $\{ \varphi^n(x) \mid n \ge 0 \}$ and likewise for the backward orbit of $x$ only with $n\le 0$.
\begin{definition}\label{def:factor}
A \emph{factor map} between the dynamical systems $(X,\varphi)$ and $(Y, f)$ is a surjective continuous map $\pi : X \to Y$ such that $\pi \circ \varphi  = f\circ \pi$. In this case, we then say that $(Y, f)$ is a \emph{factor} of $(X,\varphi)$ and that $(X, \varphi)$ is an extension or cover of $(Y, f)$.  If $\pi$ is also a homeomorphism then we say that $(X,\varphi)$ and $(Y, f)$ are \emph{(topologically) conjugate}.
\end{definition}
\begin{definition}\label{def:expansiveness}
We say that $(X,\varphi)$ is \emph{expansive} if there is a constant $\varepsilon_X > 0$ (called an \emph{expansiveness constant} of $(X,\varphi)$) such that for any $x,y \in X$, \[ d(\varphi^n(x),\varphi^n(y)) \leq \varepsilon_X \text{ for all } n \in \mathbb{Z} \] implies $x = y$.
\end{definition}
\begin{definition}\label{def:periodic}
Suppose $(X,\varphi)$ is a dynamical system.  A \emph{periodic point} is a point $p \in X$ such that $\varphi^n(p) = p$ for some $n \geq 1$.  If $n = 1$ then $p$ is a \emph{fixed point}.  We define \[ \Per_n(X,\varphi) = \{ p \in X \mid \varphi^n(p) = p \} \] for each $n \geq 1$.  We also have the set of all periodic points: 
\[ \Per(X,\varphi) = \bigcup_{n \geq 1} \Per_n(X,\varphi). \ \]
\end{definition}
We will use the following basic fact: if $(X,\varphi)$ and $(Y,f)$ are dynamical systems and $\pi : X \to Y$ is a factor map, then $\pi\parens{\Per(X,\varphi)} \subseteq \Per(Y,f)$.  

\begin{definition} \label{def:Irreducible}
A dynamical system $(X,\varphi)$ is called \emph{irreducible} if for any two non-empty open sets $U, V \subseteq X$ there is an $n > 0$ such that $\varphi^n(U) \cap V \neq \emptyset$.
\end{definition}

\begin{definition}
A dynamical system $(X,\varphi)$ is called \emph{mixing} if for any pair of non-empty open sets $U, V \subseteq X$ there is an $N > 0$ such that $\varphi^n(U) \cap V \neq \emptyset$ for all $n \geq N$.  
\end{definition}

Notice that $(X,\varphi)$ is mixing implies $(X,\varphi)$ is irreducible, but the converse does not hold in general.  

There are several equivalence relations that are related to the asymptotic behavior of dynamical systems and will be important when constructing $C^\ast$-algebras.
\begin{definition}
Suppose $(X,\varphi)$ be a dynamical system, then we have the following equivalence relations on $X$.
\begin{enumerate}[(i)]
    \item $x \sim_\text{s} y$ when $\displaystyle \lim_{n\to\infty} d(\varphi^n(x), \varphi^n(y)) = 0$,
    \item $x \sim_\text{u} y$ when $\displaystyle \lim_{n\to\infty} d(\varphi^{-n}(x), \varphi^{-n}(y)) = 0$, and
    \item $x \sim_\text{h} y$ when $x \sim_\text{s} y$ and $x \sim_\text{u} y$.
\end{enumerate}
If $x \sim_\text{s} y$ we say that $x$ and $y$ are \emph{stably equivalent} and likewise if $x \sim_\text{u} y$ we say that $x$ and $y$ are \emph{unstably equivalent}.  If $x \sim_\text{h} y$ we say that $x$ and $y$ are \emph{homoclinic}.  The stable, unstable, and homoclinic equivalence classes of $x$ are denote by $X^\text{s}(x)$, $X^\text{u}(x)$, and $X^\text{h}(x)$, respectively.  If $P \subseteq X$, then we will also denote $X^\text{s}(P)$ as the set \[ X^\text{s}(P) = \bigcup_{p \in P} X^\text{s}(p) \] and likewise for $X^\text{u}(P)$.
\end{definition}

The next lemma is well-known. A proof of the first can be found \cite{DeeSto}.
\begin{lemma}\label{lem:periodic-h-implies-equals}
If $(X,\varphi)$ is an expansive dynamical system and $p , q \in \Per(X,\varphi)$ are periodic points such that either $p \sim_\text{s} q$ or $p \sim_\text{u} q$, then $p = q$.  In particular $p \sim_\text{h} q$ implies $p = q$.
\end{lemma}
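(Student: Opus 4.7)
The plan is to reduce the unstable case to the stable case by replacing $\varphi$ with $\varphi^{-1}$, and to establish the stable case by a short periodicity argument. First I would pick an integer $N \geq 1$ which is simultaneously a period for both points, i.e.\ $\varphi^N(p)=p$ and $\varphi^N(q)=q$ (take $N$ to be the least common multiple of the individual periods of $p$ and $q$). With this choice the sequence $a_n = d(\varphi^n(p),\varphi^n(q))$ is periodic on $\{0,1,2,\ldots\}$ with period $N$, simply because $\varphi^{n+N}(p) = \varphi^n(p)$ and likewise for $q$.

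Assuming $p \sim_\text{s} q$, I would then use $\lim_{n\to\infty} a_n = 0$ together with the periodicity to conclude that $a_n = 0$ for every $n \geq 0$: for any fixed $r \in \{0,1,\ldots,N-1\}$ the subsequence $(a_{r+kN})_{k \geq 0}$ is constantly equal to $a_r$ and converges to $0$, so $a_r = 0$. Taking $r = 0$ yields $d(p,q) = 0$, hence $p = q$.

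For the unstable case I would observe that $(X,\varphi^{-1})$ has the same periodic points with the same common period $N$, and that $p \sim_\text{u} q$ in $(X,\varphi)$ is literally $p \sim_\text{s} q$ in $(X,\varphi^{-1})$; the already-established stable case then finishes the proof. The ``in particular'' claim is immediate from either case, since $p \sim_\text{h} q$ implies $p \sim_\text{s} q$ by definition.

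There is no real obstacle here: the argument is elementary and the only mild subtlety is the need to pass to a common period $N$ rather than working with the prime periods of $p$ and $q$ separately, which is what makes the sequence $a_n$ actually periodic. I note that expansiveness appears in the hypothesis but does not seem to enter the argument I have in mind; most likely it is included because the lemma is only invoked in the expansive setting throughout the paper.
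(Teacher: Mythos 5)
Your argument is correct and complete: passing to a common period $N$ makes the sequence $a_n = d(\varphi^n(p),\varphi^n(q))$ genuinely $N$-periodic, and a periodic sequence converging to $0$ is identically $0$, so $d(p,q)=a_0=0$; the unstable case follows by replacing $\varphi$ with $\varphi^{-1}$. Note that the paper does not actually prove this lemma here --- it defers to the reference [DeeSto] --- so there is no in-text proof to compare against, but your route is the standard one. Your closing observation is also accurate: expansiveness is not needed for your version of the argument, since you drive the (finitely many) orbit distances all the way to $0$. A common alternative phrasing stops short of that and instead shows only that $d(\varphi^n(p),\varphi^n(q))<\varepsilon_X$ for all $n\in\mathbb{Z}$ (again by periodicity) and then invokes the expansiveness constant to conclude $p=q$; that variant is where the expansiveness hypothesis earns its place in the statement, but your sharper limit argument renders it redundant.
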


\begin{definition} \label{SmaSpaDef}
A Smale space is a metric space $(X, d)$ along with a homeomorphism $\varphi: X\rightarrow X$ with the following additional structure: there exists global constants $\epsilon_X>0$ and $0< \lambda < 1$ and a continuous map, called the bracket map, 
\[
[ \ \cdot \  , \ \cdot \ ] :\{(x,y) \in X \times X : d(x,y) \leq \epsilon_X\}\to X
\]
such that the following axioms hold
\begin{itemize}
\item[B1] $\left[ x, x \right] = x$;
\item[B2] $\left[x,[y, z] \right] = [x, z]$ when both sides are defined;
\item[B3] $\left[[x, y], z \right] = [x,z]$ when both sides are defined;
\item[B4] $\varphi[x, y] = [ \varphi(x), \varphi(y)]$ when both sides are defined;
\item[C1] For $x,y \in X$ such that $[x,y]=y$, $d(\varphi(x),\varphi(y)) \leq \lambda d(x,y)$;
\item[C2] For $x,y \in X$ such that $[x,y]=x$, $d(\varphi^{-1}(x),\varphi^{-1}(y)) \leq \lambda d(x,y)$.
\end{itemize}
We denote a Smale space simply by $(X,\varphi)$.
\end{definition}

An introduction to Smale spaces can be found in \cite{ruelle_2004}. In particular, every Smale spaces is expansive and as the notation suggests $\epsilon_X$ is an expansiveness constant for a Smale space. However, there are (many) expansive systems that are not Smale spaces. In particular, a shift space is a Smale space if and only if it is a shift of finite type, see \cite[Theorem 2.2.8]{MR3243636}, but there are (many) shift spaces that are not of finite type.

Another important class of expansive dynamical systems are the finitely presented ones. The definition of finitely presented systems is due to Fried \cite{fried1987} where several equivalent characterizations are discussed. We will use the following one: 

\begin{definition} \label{def:finPreSys}
An expansive dynamical system $(X,\varphi)$ is called \emph{finitely presented} if it is a factor of a shift of finite type.  
\end{definition}

Bowen's theorem \cite{bowen} implies that every Smale space is finitely presented. The shift spaces that are finitely presented systems are exactly the sofic shifts. There are many more examples of finitely presented systems that are not Smale spaces. We give a few examples, see \cite{fried1987} for more examples and details. 

\begin{example} \label{evenShiftExample}
The even shift is the sofic shift space in the alphabet $\{\word{0}, \word{1}\}$ obtained from the following labelled graph: 

\vspace{1em}
\begin{center}
\begin{tikzpicture}
\tikzset{every loop/.style={looseness=10, in=130, out=230}}
\pgfmathsetmacro{\S}{0.9}

\node[shape=circle, draw=black] (A) at (0,0) {};
\node[shape=circle, draw=black] (B) at (1,0) {};
\path[->,>=stealth] (A) edge[loop left] node[scale=\S, left] {$\word{1}$} (A);
\path[->,>=stealth] (A) edge[out=60, in=120] node[scale=\S, above] {$\word{0}$} (B);
\path[->,>=stealth] (B) edge[out=240, in=300] node[scale=\S, below] {$\word{0}$} (A);
\end{tikzpicture}    
\end{center}
Let $X_G$ be the edge shift coming from the unlabeled graph above. Define a map, $\pi$, by taking the map at the shift level induced from mapping defined by taking an unlabeled edge to its label. Then $\pi$ is a factor map from $X_G$ to the even shift.
\end{example}

\begin{example}
Pseudo Anosov diffeomorphisms on closed manifolds form an important class of finitely presented systems. For each orientable surface of genus greater than one, there is a pseudo Anosov diffeomorphism that is not a Smale space. Fried discussing this class of example in more detail in \cite{fried1987}.
\end{example}

\begin{example}
Certain solenoids (stationary inverse limits) are finitely presented systems. For a reader familiar with the one-dimensional Smale space solenoids considered in \cite{WilOneDim, Wil, Yi2001, Yi2003}, we present an explicit example. 

Let $Y$ be the wedge of two circles:

\begin{figure}[h]
\begin{tikzpicture}
\draw (0,0) circle [radius=3];
\draw (0,-1) circle [radius=2];
\draw[fill] (0,-3) circle [radius=0.1];

\draw (-1.3,0.7) -- (-1.15, 0.5);
\draw (1.3,0.7) -- (1.15, 0.5);
\draw (-2.42487,1.4) -- (-2.77128,1.6);
\draw (2.42487,1.4) -- (2.77128,1.6);
\node [below] at (0,-3.1) {$p$};
\node [above] at (0,3.1) {$a$};
\node [left] at (-2.77128,-1.6) {$a$};
\node [right] at (2.77128,-1.6) {$b$};
\node [right] at (-1.9,-1) {$a$};
\node [above] at (0, 1.1) {$b$};
\node [left] at (1.9,-1) {$a$};
\end{tikzpicture}
\caption{The wedge of two circles and labels determining $g$}
\label{Figure-aab/ab-PreSolenoid}
\end{figure}
A map, $g : Y \rightarrow Y$ is defined using the labels as follows. To begin, let the outer circle be the $a$-circle and the inner circle be the $b$-circle. Then each line segment labelled with $a$ is mapped onto the $a$-circle (i.e., the outer circle) and each line segment labelled with $b$ is mapped onto the $b$-circle (i.e., the inner circle). This is done in an orientation-preserving way (we oriented both circles the same way, say clockwise). 

We remark that $g$ is not a local homeomorphism and that $g$ does not satisfy the Flattening Axiom in \cite{WilOneDim, Yi2001}. The associated solenoid is the pair $(X, f)$ where 
\[ X= \{ (y_0, y_1, y_2, \ldots ) \mid g(y_{i+1})=y_i \hbox{ for i} \in \N \}
\]
and
\[ f(y_0, y_1, y_2, \ldots )=( g(y_0), g(y_1), g(y_2), \ldots) \]
The system $(X, \varphi)$ is finitely presented, but not a Smale space. To see that it is finitely presented, one constructions a factor map from the shift of finite type associated to the graph

\vspace{1em}
\begin{center}
\begin{tikzpicture}
\tikzset{every loop/.style={looseness=9, in=130, out=230}}
\pgfmathsetmacro{\S}{0.9}

\node[shape=circle, draw=black] (A) at (0,0) {};
\node[shape=circle, draw=black] (B) at (1,0) {};
\path[->,>=stealth] (A) edge[loop left] (A);
\path[->,>=stealth] (A) edge[looseness=18, in=110, out=250] (A);
\path[->,>=stealth] (B) edge[looseness=10, out= 320, in=50] (B);
\path[->,>=stealth] (A) edge[out=60, in=120] (B);
\path[->,>=stealth] (B) edge[out=240, in=300] (A);
\path[->,>=stealth] (B) edge[out=210, in=330] (A);

\end{tikzpicture}    
\end{center}

following the process discussed in detail in \cite[Section 5]{Yi2001}. To see that it is not a Smale space, one uses the fact that the Flattening Axiom fails for $(Y, g)$. This example and other similar examples will be studied in future work.
\end{example}

\subsection{Local Product Structure}\label{sec:local-stable-unstable}

In this section we discuss the local stable and unstable sets of a point in an expansive dynamical system.  

Let $(X, \varphi)$ be an expansive dynamical system with expansiveness constant $\varepsilon_X > 0$.  In the rest of this section we follow \cite{fried1987}.
\begin{definition}\label{def:loc-stable-unstable-sets}
For each $x \in X$ and $0<\varepsilon < \varepsilon_X$ we define the following subsets called, respectively, the \emph{local stable} and \emph{local unstable} sets of $x$.
\begin{align*}
    X^\text{s}(x, \varepsilon) &= \left\{ y \in X \mid d\left(\varphi^n(x),\varphi^n(y)\right) < \varepsilon \text{ for all } n \geq 0 \right\} \\
    X^\text{u}(x, \varepsilon) &= \left\{ y \in X \mid d\left(\varphi^{-n}(x),\varphi^{-n}(y)\right) < \varepsilon \text{ for all } n \geq 0 \right\}
\end{align*}
Furthermore, one can show directly from this definition that
\begin{align*}
    \varphi^{-N}\left(X^\text{s}\left(\varphi^{N}(x), \varepsilon\right)\right) &= \left\{ y \in X \mid d\left(\varphi^n(x),\varphi^n(y)\right) < \varepsilon \text{ for all } n \geq N \right\} \text{ , and } \\
    \varphi^{N}\left(X^\text{u}\left(\varphi^{-N}(x), \varepsilon\right)\right) &= \left\{ y \in X \mid d\left(\varphi^{-n}(x),\varphi^{-n}(y)\right) < \varepsilon \text{ for all } n \geq N \right\}
\end{align*}
for any $N \in \Z$.
\end{definition}
In the Smale space case (see \cite[Definition 2.1.7]{MR3243636}) we have that 
\begin{align*}
    X^\text{s}(x, \varepsilon) &= \left\{ y \in X \mid [x, y]=y \hbox{ and }d(x,y)< \varepsilon \right\} and \\
    X^\text{u}(x, \varepsilon) &= \left\{ y \in X \mid [x,y]=x \hbox{ and }d(x,y)< \varepsilon \right\}.
\end{align*}

The next few lemmas (and definitions) will be useful. They are either well-known (e.g., Lemma \ref{lem:bracket-unique-intersection}) or can be found, with proofs in the case of the lemmas, in \cite{DeeSto}.

\begin{lemma}\label{lem:bracket-unique-intersection}
For $\displaystyle 0 < \varepsilon \leq \frac{\varepsilon_X}{2}$, the intersection $X^\text{s}(x, \varepsilon) \cap X^\text{u}(y, \varepsilon)$ consists of at most one point in $X$.
\end{lemma}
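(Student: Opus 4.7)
The plan is to argue by expansiveness. Suppose for contradiction (or simply, to show uniqueness) that $z$ and $w$ are two points in $X^\text{s}(x,\varepsilon) \cap X^\text{u}(y,\varepsilon)$. The goal is to show that the $\varphi$-orbits of $z$ and $w$ stay within distance $\varepsilon_X$ of each other for every $n \in \mathbb{Z}$, so that Definition \ref{def:expansiveness} forces $z = w$.

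First I would handle the forward iterates $n \geq 0$ using that both $z$ and $w$ are in $X^\text{s}(x,\varepsilon)$. By definition, $d(\varphi^n(x),\varphi^n(z)) < \varepsilon$ and $d(\varphi^n(x),\varphi^n(w)) < \varepsilon$ for all $n \geq 0$. The triangle inequality then gives
\[
d(\varphi^n(z),\varphi^n(w)) \leq d(\varphi^n(z),\varphi^n(x)) + d(\varphi^n(x),\varphi^n(w)) < 2\varepsilon \leq \varepsilon_X
\]
for every $n \geq 0$. Next I would run the symmetric argument for the backward iterates using $z,w \in X^\text{u}(y,\varepsilon)$: from $d(\varphi^{-n}(y),\varphi^{-n}(z)) < \varepsilon$ and $d(\varphi^{-n}(y),\varphi^{-n}(w)) < \varepsilon$ for all $n \geq 0$, the same triangle estimate yields $d(\varphi^{-n}(z),\varphi^{-n}(w)) < 2\varepsilon \leq \varepsilon_X$ for all $n \geq 0$.

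Combining the two estimates gives $d(\varphi^n(z),\varphi^n(w)) \leq \varepsilon_X$ for all $n \in \mathbb{Z}$, so expansiveness of $(X,\varphi)$ with constant $\varepsilon_X$ forces $z = w$, which is exactly the claim. There is no real obstacle here; the only subtlety worth noting is the interplay between the strict inequalities in Definition \ref{def:loc-stable-unstable-sets} and the non-strict inequality in Definition \ref{def:expansiveness}, which is precisely what makes the bound $\varepsilon \leq \varepsilon_X/2$ (rather than a strictly smaller bound) suffice.
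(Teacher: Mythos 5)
Your proof is correct and is precisely the standard expansiveness-plus-triangle-inequality argument; the paper itself omits the proof of this lemma, citing it as well-known, and what you have written is exactly that well-known argument. Your closing remark about the interplay of strict and non-strict inequalities is accurate: the bound $2\varepsilon \leq \varepsilon_X$ combined with the strict inequalities in Definition \ref{def:loc-stable-unstable-sets} is enough to invoke Definition \ref{def:expansiveness}.
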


\begin{definition}\label{def:bracket-map}
For $0 < \varepsilon \leq \frac{\varepsilon_X}{2}$ we define the set
\[ D_\varepsilon = \{ (x, y) \in X \times X \mid X^\text{s}(x, \varepsilon) \cap X^\text{u}(y, \varepsilon) \neq \emptyset \} \]
and a map $[-,-]: D_\varepsilon \to X$, called the \emph{bracket map}, such that $[x,y] \in X^\text{s}(x, \varepsilon) \cap X^\text{u}(y, \varepsilon)$.  By Lemma \ref{lem:bracket-unique-intersection} this map is well-defined.
\end{definition}
\begin{remark}
It is well-known that if $(X, \varphi)$ is a Smale space, then the definition of the bracket map in the previous definition agrees with the one in the definition of a Smale space.
\end{remark}

\begin{lemma}\label{lem:epsilon-naught-existence}
Suppose $(X,\varphi)$ is an expansive dynamical system. There exists $\varepsilon_0 > 0$ such that for $\displaystyle 0 < \varepsilon, \varepsilon' \leq \varepsilon_0$, $(x,x)$ is in the interior of $D_\varepsilon$ if and only if it is in the interior of $D_{\varepsilon'}$.
\end{lemma}

\begin{definition}\label{def:synchronizing-point}
Suppose $(X,\varphi)$ is an expansive dynamical system.  A point $x \in X$ is called \emph{synchronizing} if $(x,x)$ is in the interior of $D_\varepsilon$ for some $\displaystyle 0 < \varepsilon \leq \varepsilon_0$, where $\varepsilon_0$ is as in Lemma \ref{lem:epsilon-naught-existence}.  We will denote the subset of synchronizing points $X_\sync$.
\end{definition}

\begin{lemma} \label{lem:DclosedBraCts}
The set $D_\varepsilon$ is closed in $X \times X$ and the map $[-,-]$ is (uniformly) continuous.  Additionally, the set $\Delta_X = \{(x,x) \mid x \in X\}$ is contained in $D_\varepsilon$, and $[x,x] = x$ for all $x \in X$.
\end{lemma}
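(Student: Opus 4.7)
The bracket identity $[x,x] = x$ and the inclusion $\Delta_X \subseteq D_\varepsilon$ I would dispose of immediately. For every $x \in X$ and every $n \in \Z$, $d(\varphi^n(x), \varphi^n(x)) = 0 < \varepsilon$, so $x \in X^\text{s}(x,\varepsilon) \cap X^\text{u}(x,\varepsilon)$. Hence $(x,x) \in D_\varepsilon$; by the uniqueness in Lemma \ref{lem:bracket-unique-intersection} the intersection equals $\{x\}$, so $[x,x]=x$.

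For closedness of $D_\varepsilon$ I would use sequential compactness. Suppose $(x_k, y_k) \in D_\varepsilon$ converges to $(x,y)$, and set $z_k = [x_k, y_k]$. Compactness of $X$ lets me pass to a subsequence with $z_k \to z$. For each fixed $n \geq 0$, continuity of $\varphi^{\pm n}$ and passage to the limit in the defining inequalities
\[ d(\varphi^n(x_k), \varphi^n(z_k)) < \varepsilon, \qquad d(\varphi^{-n}(y_k), \varphi^{-n}(z_k)) < \varepsilon \]
gives $d(\varphi^n(x), \varphi^n(z)) \leq \varepsilon$ and $d(\varphi^{-n}(y), \varphi^{-n}(z)) \leq \varepsilon$. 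One minor wrinkle is that these are non-strict: the clean way around it is to work at a parameter $\varepsilon' < \varepsilon$ and observe that $\overline{X^\text{s}(x,\varepsilon')} \subseteq X^\text{s}(x,\varepsilon)$ (and analogously in the unstable direction), so that the limit witness $z$ lands in the open local sets at scale $\varepsilon$. This shows $z \in X^\text{s}(x,\varepsilon) \cap X^\text{u}(y,\varepsilon)$ and therefore $(x,y) \in D_\varepsilon$.

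Continuity of $[-,-]$ then follows from the same sequential argument combined with uniqueness: if $(x_k, y_k) \to (x,y)$ in $D_\varepsilon$, every convergent subsequence of $z_k = [x_k, y_k]$ has limit in $X^\text{s}(x,\varepsilon) \cap X^\text{u}(y,\varepsilon) = \{[x,y]\}$ by Lemma \ref{lem:bracket-unique-intersection}, so the full sequence converges to $[x,y]$. Uniform continuity is then automatic, since $D_\varepsilon$ is a closed subset of the compact metric space $X \times X$, hence compact, and any continuous function on a compact metric space is uniformly continuous.

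The only step where I anticipate any real obstacle is the strict-vs-non-strict inequality in the closedness argument, since limits of strict inequalities are only non-strict. The scale-shrinking trick above resolves this, but if one preferred, an alternative is to establish closedness first for the variant $\widetilde{D}_\varepsilon$ defined with $\leq$ inequalities, note that $\widetilde{D}_{\varepsilon'} \subseteq D_\varepsilon$ whenever $\varepsilon' < \varepsilon$, and then work inside a nested family; either route is routine given Lemma \ref{lem:bracket-unique-intersection} and compactness of $X$.
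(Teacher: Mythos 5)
Your overall strategy --- compactness of $X$, subsequential limits of $z_k=[x_k,y_k]$, uniqueness via Lemma \ref{lem:bracket-unique-intersection}, and uniform continuity from continuity on the compact set $D_\varepsilon$ --- is the standard one; note that the paper itself does not prove this lemma but defers to \cite{DeeSto}, and your argument is surely the intended shape of that proof. The verification of $\Delta_X\subseteq D_\varepsilon$ and $[x,x]=x$, and the final deduction of uniform continuity, are correct as written.

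The genuine gap is precisely the strict-inequality issue you flagged, and neither of your proposed repairs closes it. The limit argument produces a point $z$ with $d(\varphi^n(x),\varphi^n(z))\le\varepsilon$ and $d(\varphi^{-n}(y),\varphi^{-n}(z))\le\varepsilon$ for all $n\ge 0$, i.e.\ membership in the \emph{non-strict} local sets at scale $\varepsilon$. Your scale-shrinking trick would need $z\in\overline{X^{\text{s}}(x,\varepsilon')}$ for some $\varepsilon'<\varepsilon$, but the hypothesis $(x_k,y_k)\in D_\varepsilon$ only places $z_k$ in $X^{\text{s}}(x_k,\varepsilon)$ --- varying base points and scale exactly $\varepsilon$ --- so no smaller scale is ever available and the inclusion $\overline{X^{\text{s}}(x,\varepsilon')}\subseteq X^{\text{s}}(x,\varepsilon)$ is never applicable. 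The alternative via $\widetilde{D}_{\varepsilon'}\subseteq D_\varepsilon$ has the same defect: it exhibits closed sets \emph{inside} $D_\varepsilon$ rather than showing $D_\varepsilon$ is closed. The clean repair is to run the whole argument with the non-strict ($\le\varepsilon$) versions of the local stable and unstable sets: these are closed, so the compactness argument closes up verbatim, and the uniqueness needed both for well-definedness of the limit and for the continuity step survives, since any two points $z,z'$ in the non-strict intersection satisfy $d(\varphi^n(z),\varphi^n(z'))\le 2\varepsilon\le\varepsilon_X$ for all $n\in\Z$ (via $x$ for $n\ge 0$ and via $y$ for $n\le 0$) and hence coincide by expansiveness. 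With the strict convention of Definition \ref{def:loc-stable-unstable-sets} as literally stated, your argument only shows the limit witness lies in the closed local sets, and some additional input of this kind is unavoidable; you should also note that your continuity step quietly uses the same uniqueness-for-closed-sets fact, since the subsequential limit $z$ is again only known to satisfy the non-strict inequalities before being identified with $[x,y]$.
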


\begin{lemma}
Suppose $0< \epsilon < \frac{\varepsilon_X}{2}$. If $z \in X^s(y, \epsilon)$, then $[y,z]=z$ and $[z,y]=y$.
\end{lemma}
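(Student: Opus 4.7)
The plan is to unpack the definitions of the local stable/unstable sets and the bracket map, then invoke the uniqueness clause of Lemma \ref{lem:bracket-unique-intersection}. Both equalities reduce to exhibiting an explicit point in the relevant intersection $X^s(\cdot, \epsilon) \cap X^u(\cdot, \epsilon)$; uniqueness of that intersection point then identifies the bracket.

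For $[y,z] = z$, I would first check that $z$ itself lies in $X^s(y, \epsilon) \cap X^u(z, \epsilon)$. Membership in $X^s(y, \epsilon)$ is the hypothesis. Membership in $X^u(z, \epsilon)$ is immediate because $d(\varphi^{-n}(z), \varphi^{-n}(z)) = 0 < \epsilon$ for every $n \geq 0$. Hence the intersection is non-empty, so $(y,z) \in D_\epsilon$ and the bracket is defined; by Lemma \ref{lem:bracket-unique-intersection} the intersection consists of a single point, which must therefore be $z$, giving $[y,z] = z$.

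For $[z,y] = y$, a symmetric argument applies. Trivially $y \in X^u(y, \epsilon)$, so it suffices to verify $y \in X^s(z, \epsilon)$. The condition $z \in X^s(y, \epsilon)$ says $d(\varphi^n(y), \varphi^n(z)) < \epsilon$ for all $n \geq 0$, and because the metric is symmetric this is exactly the condition $y \in X^s(z, \epsilon)$. Thus $y$ lies in $X^s(z, \epsilon) \cap X^u(y, \epsilon)$, so $(z,y) \in D_\epsilon$, and uniqueness again forces $[z,y] = y$.

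I do not foresee any real obstacle here: the lemma is essentially a direct consequence of unpacking Definition \ref{def:bracket-map} and applying the uniqueness statement of Lemma \ref{lem:bracket-unique-intersection}. The hypothesis $\epsilon < \varepsilon_X/2$ is used only to put us in the regime where Lemma \ref{lem:bracket-unique-intersection} applies and the bracket map is defined in the first place.
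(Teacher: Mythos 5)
Your proof is correct and is the standard direct argument: exhibit $z$ (respectively $y$) as a point of the relevant intersection $X^\text{s}(\cdot,\epsilon)\cap X^\text{u}(\cdot,\epsilon)$ and invoke the uniqueness from Lemma \ref{lem:bracket-unique-intersection}. The paper itself defers the proof to \cite{DeeSto}, where the argument is essentially the same unpacking of Definition \ref{def:bracket-map}, so there is nothing to add.
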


\begin{lemma}\label{lem:stable-unstable-inductive-limit-top}
Let $(X,\varphi)$ be an expansive dynamical system and let $x \in X$.  Then for any $\varepsilon > 0$ the stable and unstable equivalence classes satisfy
\begin{enumerate}[(i)]
    \item $\displaystyle X^\text{s}(x) = \bigcup_{N \geq 0} \varphi^{-N}\left(X^\text{s}\left(\varphi^{N}(x), \varepsilon\right)\right)$, and
    \item $\displaystyle X^\text{u}(x) = \bigcup_{N \geq 0} \varphi^{N}\left(X^\text{u}\left(\varphi^{-N}(x), \varepsilon\right)\right)$,
\end{enumerate}
respectively.  Hence we will often refer to the equivalence classes $X^\text{s}(x)$ and $X^\text{u}(x)$ as the \emph{global stable set} and \emph{global unstable set} of $x$, respectively.
\end{lemma}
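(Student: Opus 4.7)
The plan is to prove (i) directly and deduce (ii) from it by applying (i) to the expansive system $(X, \varphi^{-1})$, using that the same constant $\varepsilon_X$ works and that $X^\text{u}(x, \varepsilon)$ for $(X,\varphi)$ coincides with $X^\text{s}(x, \varepsilon)$ for $(X, \varphi^{-1})$. The inclusion $\subseteq$ in (i) is immediate from the definitions: if $y \sim_\text{s} x$, then $d(\varphi^n(x), \varphi^n(y)) \to 0$, so for any fixed $\varepsilon > 0$ there is an $N \geq 0$ with $d(\varphi^n(x), \varphi^n(y)) < \varepsilon$ for all $n \geq N$, which by the reformulation in Definition \ref{def:loc-stable-unstable-sets} says exactly that $y \in \varphi^{-N}\left(X^\text{s}(\varphi^{N}(x), \varepsilon)\right)$.

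For the reverse inclusion, fix $\varepsilon$ with $0 < \varepsilon \leq \varepsilon_X$ (the regime in which the local stable set is defined in Definition \ref{def:loc-stable-unstable-sets}) and suppose $y$ lies in some $\varphi^{-N}\left(X^\text{s}(\varphi^{N}(x), \varepsilon)\right)$, so $d(\varphi^n(x), \varphi^n(y)) < \varepsilon$ for all $n \geq N$. I would argue by contradiction. If $d(\varphi^n(x), \varphi^n(y)) \not\to 0$, there exist $\delta > 0$ and $n_k \to \infty$ with $d(\varphi^{n_k}(x), \varphi^{n_k}(y)) \geq \delta$. By compactness of $X \times X$, pass to a subsequence so that $\varphi^{n_k}(x) \to a$ and $\varphi^{n_k}(y) \to b$ with $d(a,b) \geq \delta > 0$. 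For any fixed $m \in \mathbb{Z}$, eventually $n_k + m \geq N$, so $d(\varphi^{n_k+m}(x), \varphi^{n_k+m}(y)) < \varepsilon$, and continuity of $\varphi^m$ lets us take $k \to \infty$ to obtain $d(\varphi^m(a), \varphi^m(b)) \leq \varepsilon \leq \varepsilon_X$ for every $m \in \mathbb{Z}$. Expansiveness of $(X,\varphi)$ then forces $a = b$, contradicting $d(a,b) \geq \delta$, so $y \sim_\text{s} x$ as required.

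The main obstacle is this second direction: in the Smale space setting it is immediate from the uniform exponential contraction in axiom C1, but for a general expansive system we only know that the forward orbits of $x$ and $y$ eventually lie within $\varepsilon$ of each other, not that their distances shrink. One must therefore use expansiveness nontrivially to promote ``eventually close'' to ``asymptotically zero,'' and the compactness-plus-expansiveness argument above is the standard device for doing so; it is the only place in the proof where the expansiveness hypothesis is actually used.
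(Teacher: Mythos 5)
Your argument is correct: the forward inclusion is immediate from the reformulation in Definition \ref{def:loc-stable-unstable-sets}, and your compactness-plus-expansiveness contradiction (with the necessary restriction $0<\varepsilon\le\varepsilon_X$, which is implicit in the lemma since the local stable sets are only defined in that range) is exactly the standard device needed to upgrade ``eventually $\varepsilon$-close'' to ``asymptotic.'' The paper itself defers the proof to \cite{DeeSto}, where the same idea appears (packaged as a uniform-expansiveness lemma), so this is essentially the same approach rather than a genuinely different route.
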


When $p$ is a periodic point, we topologize $X^\text{s}(p)$ and $X^\text{u}(p)$ with the inductive limit topology coming from Lemma \ref{lem:stable-unstable-inductive-limit-top}, see \cite[Section 4.1]{thomsen2010c}.  Note that with this topology $X^\text{s}(p)$ and $X^\text{u}(p)$ are both locally compact and Hausdorff.

\begin{definition}\label{def:synchronizing-general}
An expansive dynamical system $(X,\varphi)$ is called \emph{synchronizing} if it is irreducible and it has at least one synchronizing point.
\end{definition}

\subsection{Metrics on expansive systems} \label{sec:MetExpSys}
Given an expansive dynamical system, Fried proved in \cite[Lemma 2]{fried1987} that there exists a metric $d$ (we will call $d$ an \emph{adapted metric}) and constants $\eta > 0$, $0 < \lambda < 1$ such that $d$ is gives the original topology on $X$,
\begin{align*}
    d(\varphi(x), \varphi(y)) &\leq \lambda d(x, y) \text{ for all } y \in X^\text{s}(x, \eta) \text{ and} \\
    d(\varphi^{-1}(x), \varphi^{-1}(y)) &\leq \lambda d(x, y) \text{ for all } y \in X^\text{u}(x, \eta) \,.
\end{align*}
In addition, by \cite{sakai01} this metric can additionally be taken so that both $\varphi$ and $\varphi^{-1}$ are Lipschitz for some Lipschitz constant $K > 1$.  

In the Smale space case, we can do even better. Sakai \cite{sakai01} proved the following (also see \cite[Lemma 4.10]{Ger2022} for more details).
\begin{lemma} \label{lem:AXconstant}
Suppose that $(X, \varphi)$ is a Smale space with an adapted metric such that both $\varphi$ and $\varphi^{-1}$ are Lipschitz. Then there exists $A_X>0$ such that 
\[
d(a, [a, b])\leq A_Xd(a,b) \hbox{ and }d(b, [a, b])\leq A_Xd(a,b)
\]
where $a, b \in X$ satisfy $d(a, b)< \epsilon_X$ (so that $[a, b]$ is well-defined). It is useful to note that we can assume that $A_X \ge 1$.
\end{lemma}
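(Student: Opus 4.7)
The plan is to verify a Lipschitz-type bound for the bracket map near the diagonal; this is essentially Sakai's refinement of Fried's adapted metric construction, using the local product structure in a quantitative way. My plan splits into three steps.

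First, I use the uniform continuity of $[\cdot,\cdot]$ on the compact set $D_{\epsilon_X}$ (Lemma \ref{lem:DclosedBraCts}) together with $[x,x]=x$ to reduce to a local problem. Specifically, there exists $\delta_0 \in (0, \epsilon_X/2)$ such that whenever $d(a,b)<\delta_0$, the bracket $c:=[a,b]$ lies in both $X^s(a,\eta)$ and $X^u(b,\eta)$, where $\eta$ is the contraction radius of the adapted metric from Section \ref{sec:MetExpSys}. This positions us to apply axioms C1 and C2 directly to the pairs $(a,c)$ and $(c,b)$.

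Second, on this neighborhood I would derive the linear bound by combining the hyperbolicity axioms with axiom B4. By C1 and C2 we get, for each $n\geq 0$,
\[
d(\varphi^n(a),\varphi^n(c))\leq \lambda^n d(a,c), \qquad d(\varphi^{-n}(b),\varphi^{-n}(c))\leq \lambda^n d(b,c),
\]
while axiom B4 gives $\varphi^n(c)=[\varphi^n(a),\varphi^n(b)]$, so the same structure is preserved under iteration. Combined with the Lipschitz constants of $\varphi^{\pm 1}$, the triangle inequality $d(a,c)\leq d(a,b)+d(b,c)$ and its companion can be closed: one picks $n$ large enough that the $\lambda^n$ factors dominate the corresponding $K^n$ expansion on the stable/unstable sides to produce simultaneous linear bounds $d(a,c), d(b,c)\leq C\,d(a,b)$ with $C$ depending only on $\lambda$ and $K$.

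Third, to extend the estimate from the local neighborhood $\{d(a,b)<\delta_0\}$ to the full set $\{(a,b)\in D_{\epsilon_X}:d(a,b)<\epsilon_X\}$, I appeal to compactness: the function $(a,b)\mapsto d(a,[a,b])/d(a,b)$ is continuous off the diagonal, bounded near the diagonal by the previous step, and lives on a compact domain once one bounds $d(a,b)$ away from zero; hence it attains a finite maximum. Replacing the resulting constant by $\max(A_X,1)$ is harmless since both inequalities are preserved, and the symmetric bound on $d(b,[a,b])$ follows by the same argument with the roles of $a$ and $b$ interchanged.

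The main obstacle is the second step: a naive iteration produces constants that blow up in $n$, and one must carefully balance the $\lambda^n$ contraction on the stable (resp.\ unstable) side against the $K^n$ expansion on the other side. This interplay is precisely what the adapted-metric construction of Fried, together with Sakai's Lipschitz refinement, is designed to make possible; it also explains why the analogous statement fails for merely expansive systems lacking a product structure.
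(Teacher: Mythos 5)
The paper does not actually prove this lemma; it is imported from Sakai \cite{sakai01} (see also \cite[Lemma 4.10]{Ger2022}), so your argument has to stand on its own. Your first and third steps are fine: uniform continuity of the bracket together with $[x,x]=x$ does localize the problem to a neighborhood of the diagonal, and away from the diagonal the ratio $d(a,[a,b])/d(a,b)$ is a continuous function on a compact set, hence bounded. The gap is the second step, which is the whole content of the lemma, and as written it does not work. Setting $c=[a,b]$, the estimates you can actually extract from C1, C2, B4 and the Lipschitz constants are of the shape
\[
d(a,c)\;\le\;K^{n}\,d\bigl(\varphi^{-n}(a),\varphi^{-n}(c)\bigr)\;\le\;K^{2n}\,d(a,b)+(K\lambda)^{n}\,d(b,c),
\]
and there is no $n$ for which ``the $\lambda^{n}$ factors dominate the corresponding $K^{n}$ expansion'': whenever the local unstable sets are nontrivial one necessarily has $K\lambda\ge 1$ (otherwise $d(b,c)\le K\,d(\varphi^{-1}(b),\varphi^{-1}(c))\le K\lambda\,d(b,c)$ would force $c=b$), so the first term blows up and the second does not decay. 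Choosing $n$ as a function of $d(a,b)$ in the renormalization inequality $h(r)\le\lambda\,h(Kr)$, where $h(r)=\sup\{d(a,[a,b]):d(a,b)\le r\}$, yields only a H\"older bound with exponent $\log(1/\lambda)/\log K\le 1$, which is Lipschitz precisely when $K=\lambda^{-1}$.

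That is the missing idea: the conclusion is not a formal consequence of ``$\varphi^{\pm1}$ Lipschitz for some $K>1$'' plus C1--C2. One needs the finer quantitative property of the Fried--Sakai adapted metric --- essentially that $K$ can be taken to be $\lambda^{-1}$, or equivalently that near the diagonal the metric is comparable to the maximum of the stable and unstable displacements, so that the bracket acts as a coordinate projection --- and your proposal never states or invokes this. Your closing paragraph correctly identifies the $\lambda^{n}$ versus $K^{n}$ tension as the main obstacle, but gesturing at the adapted-metric construction does not resolve it; the proof has to either establish $K=\lambda^{-1}$ (or the local product comparability of the metric) or cite it, and then run the renormalization with that input.
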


\subsection{Maps} \label{SecMaps}
The next lemma is due to Fisher \cite[Lemma 3.3]{fisher13} (also see \cite[Lemma 2.2]{PutLift} in the case when both spaces are Smale spaces).
\begin{lemma} \label{lem:EpsPi}
Suppose $\pi: (X, \varphi) \rightarrow (Y, f)$ is a factor map from a Smale space to a finitely presented system. Then there exists $\epsilon_{\pi}>0$ such that if $x_1, x_2 \in X$ with $d(x_1, x_2)< \epsilon_{\pi}$, then $\pi([x_1, x_2])=[\pi(x_1), \pi(x_2)]$.
\end{lemma}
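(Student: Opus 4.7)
The plan is to show that $\pi([x_1,x_2])$ lies in $Y^{\text{s}}(\pi(x_1),\epsilon)\cap Y^{\text{u}}(\pi(x_2),\epsilon)$ for a sufficiently small $\epsilon \le \epsilon_Y/2$. Lemma~\ref{lem:bracket-unique-intersection} (applied to $Y$) together with Definition~\ref{def:bracket-map} will then force this intersection to consist of the single point $[\pi(x_1),\pi(x_2)]$, giving the desired equality.

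First I would fix an expansiveness constant $\epsilon_Y$ for $(Y,f)$ and pick $\epsilon \in (0,\epsilon_Y/2]$. By uniform continuity of $\pi$, choose $\delta>0$ such that $d(a,b)<\delta$ implies $d(\pi(a),\pi(b))<\epsilon$. Work with an adapted metric on the Smale space $X$, using constants $\eta_X>0$ and $0<\lambda<1$ from Section~\ref{sec:MetExpSys} and $A_X\ge 1$ from Lemma~\ref{lem:AXconstant}. Finally choose $\epsilon_\pi\in(0,\epsilon_X]$ with $A_X\epsilon_\pi<\min(\eta_X,\delta)$.

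The heart of the argument is the following. Assume $d(x_1,x_2)<\epsilon_\pi$, so $[x_1,x_2]$ is defined in $X$. By Lemma~\ref{lem:AXconstant}, $d(x_1,[x_1,x_2])\le A_X d(x_1,x_2) < \min(\eta_X,\delta)$, and since $[x_1,[x_1,x_2]]=[x_1,x_2]$ by axiom B2, iterating axiom C1 together with the adapted metric gives $d(\varphi^n(x_1),\varphi^n([x_1,x_2]))\le \lambda^n d(x_1,[x_1,x_2])<\delta$ for every $n\ge 0$. Applying $\pi$ and using $\pi\circ\varphi = f\circ\pi$ together with the choice of $\delta$, I get $d(f^n(\pi(x_1)), f^n(\pi([x_1,x_2])))<\epsilon$ for all $n\ge 0$, i.e., $\pi([x_1,x_2])\in Y^{\text{s}}(\pi(x_1),\epsilon)$. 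The symmetric computation, using $d(x_2,[x_1,x_2])\le A_X d(x_1,x_2)$, axiom B3, and C2, yields $\pi([x_1,x_2])\in Y^{\text{u}}(\pi(x_2),\epsilon)$.

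Consequently $(\pi(x_1),\pi(x_2))\in D_\epsilon$ in $Y$; by Lemma~\ref{lem:bracket-unique-intersection} and Definition~\ref{def:bracket-map} this forces $\pi([x_1,x_2])=[\pi(x_1),\pi(x_2)]$. I do not foresee any serious obstacle: the only care required is in sequencing the choices of $\epsilon$, $\delta$, and $\epsilon_\pi$ so that the Smale space contraction brings $[x_1,x_2]$ close enough to $x_1$ (and to $x_2$) for its image under $\pi$ to remain inside the small local stable (resp.\ unstable) set of $\pi(x_1)$ (resp.\ $\pi(x_2)$) under all forward (resp.\ backward) iterates of $f$.
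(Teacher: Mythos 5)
Your argument is correct. Note, however, that the paper does not prove this lemma at all: it is quoted from Fisher \cite[Lemma 3.3]{fisher13} (see also \cite[Lemma 2.2]{PutLift}), so there is no in-paper proof to compare against. Your proof is the standard one and is essentially the argument in those references: use Lemma \ref{lem:AXconstant} and the adapted metric to see that $[x_1,x_2]$ stays $\delta$-close to $x_1$ under all forward iterates and to $x_2$ under all backward iterates, push this through the uniformly continuous, equivariant map $\pi$ to place $\pi([x_1,x_2])$ in $Y^{\text{s}}(\pi(x_1),\epsilon)\cap Y^{\text{u}}(\pi(x_2),\epsilon)$, and invoke Lemma \ref{lem:bracket-unique-intersection} together with Definition \ref{def:bracket-map} to identify it with $[\pi(x_1),\pi(x_2)]$. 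The only points worth making explicit in a final write-up are the inductive justification that the contraction estimate can be iterated (via axioms B2, B4, and the adapted-metric inequality at each step, which keeps $\varphi^n([x_1,x_2])$ in $X^{\text{s}}(\varphi^n(x_1),\eta_X)$), and the harmless remark that passing to an adapted metric on $X$ only changes $\epsilon_\pi$ by a uniform amount.
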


\begin{definition} \label{def:Ures}
If $\pi:X \to Y$ is a factor map of expansive dynamical systems, then $\pi$ is called \emph{u-resolving} (respectively, \emph{s-resolving}) is $\pi$ restricted to $X^\text{u}(x)$ (respectively, $X^\text{s}(x)$) is injective for all $x \in X$.  We also say that $\pi$ is \emph{one-to-one almost everywhere or almost one-to-one} if there is a residual set of points in $X$ with unique pre-image under $\pi$.  
\end{definition}

We have the following result from \cite{fisher13} with the ``Moreover" part following from \cite[Lemma 6.6]{DeeSto}.   

\begin{theorem}{\cite[Theorem 1.1, Lemma 3.2]{fisher13}}\label{theorem:fp-factor-of-smale}
Suppose $(Y, f)$ is an irreducible finitely presented system.  Then there is an irreducible Smale space $(X,\varphi)$ and a u-resolving factor map $\pi:X \to Y$ such 
\begin{enumerate}
\item $\pi$ is one-to-one almost everywhere, 
\item there exists a dense open set $W \subseteq Y$ such that each periodic point in $W$ has a unique pre-image under $\pi$, and 
\item $\pi: X \rightarrow Y$ is the minimal such extension.
\end{enumerate}
Moreover, if $(Y, f)$ is mixing, then $(X, \varphi)$ is also mixing.
\end{theorem}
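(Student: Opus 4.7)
The plan is to build the desired cover as a quotient of a shift of finite type that presents $(Y,f)$, then verify it inherits the required structure. Since $(Y,f)$ is finitely presented, by Definition \ref{def:finPreSys} there is a factor map $\rho:(\Sigma,\sigma)\to (Y,f)$ from a shift of finite type, and SFTs are Smale spaces. We may choose $\Sigma$ to be irreducible (and mixing when $(Y,f)$ is) by restricting to an appropriate irreducible component whose image is all of $Y$. In general $\rho$ will fail to be u-resolving: two points of $\Sigma$ may be unstably equivalent and map to the same point of $Y$. The idea is to identify exactly the redundancy in the stable direction within fibers.

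Define a relation on $\Sigma$ by $x\sim_0 y$ if $\rho(x)=\rho(y)$ and $x\sim_s y$, and let $\sim$ be the closed equivalence relation generated by $\sim_0$. Set $X=\Sigma/\!\sim$ with the quotient map $q:\Sigma\to X$. Since $\sim_0$ is $\sigma$-invariant (both $\rho$-fibers and $\sim_s$ are preserved), $\sigma$ descends to a homeomorphism $\varphi$ on $X$, and $\rho$ factors through $q$ to give a factor map $\pi:X\to Y$. The next step is to check that $X$ inherits a Smale space structure: using Lemma \ref{lem:EpsPi} applied to $\rho$, the bracket on $\Sigma$ is compatible with $\rho$-fibers, and because $\sim$ collapses only stable directions, the bracket descends to a continuous map on $X$ satisfying the axioms B1--B4 and C1--C2 of Definition \ref{SmaSpaDef}, with the adapted metric of Section \ref{sec:MetExpSys} pushing forward appropriately.

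For the u-resolving property: suppose $\pi(x)=\pi(y)$ with $x\sim_u y$ in $X$; lifting to $\Sigma$ and using that our equivalence only identifies stably-equivalent points in fibers, one shows any two lifts become identified via the stable direction, which is transverse to the unstable direction preserved by the lifts, forcing $x=y$. The almost one-to-one property and the dense open set $W$ come from a standard Baire-category argument: the set of $y\in Y$ whose $\rho$-preimage lies in a single $\sim$-class is the complement of a countable union of closed nowhere dense sets indexed by the word structure of $\Sigma$; intersecting with the periodic points and using Lemma \ref{lem:periodic-h-implies-equals} to rule out nontrivial stable coincidences among periodic lifts gives (2). Minimality is then a formal consequence: given any u-resolving Smale space extension $\tilde\pi:(\tilde X,\tilde\varphi)\to (Y,f)$, the composition $\tilde\pi^{-1}\circ\rho$ (suitably interpreted via a common cover) must collapse $\sim_0$, hence factors through $q$.

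The main obstacle will be verifying that the closed equivalence $\sim$ is \emph{precisely} the stable-fiber relation rather than something coarser, so that $X$ retains the hyperbolicity needed to be a Smale space. Concretely, one must rule out that taking the closure of $\sim_0$ introduces identifications across unstable directions, which is exactly where Fisher's analysis \cite{fisher13} of s- and u-bisections enters; once this is in hand the rest of the verification is routine. The ``Moreover'' part is handled separately as in \cite[Lemma 6.6]{DeeSto}: mixing of $(Y,f)$ forces mixing of each irreducible component of $\Sigma$ chosen above (otherwise the periodic structure would obstruct mixing of $(Y,f)$), and the quotient $X$ inherits mixing from $\Sigma$ since $\pi$ is surjective and $q$ is continuous.
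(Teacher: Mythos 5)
There is a genuine gap, and it sits at the center of your argument. First, for context: the paper does not prove this statement at all --- it is imported wholesale from Fisher \cite[Theorem 1.1, Lemma 3.2]{fisher13}, with only the ``Moreover'' clause attributed to \cite[Lemma 6.6]{DeeSto}. So your proposal is a from-scratch reconstruction of Fisher's theorem, and the reconstruction defers its hardest step back to \cite{fisher13} itself: you write that ruling out extra identifications introduced by taking the closure of $\sim_0$ is ``exactly where Fisher's analysis enters.'' That is not a side issue to be quoted away --- it is the theorem. Once you pass to the closed equivalence relation generated by $\sim_0$ you have no a priori control over what the closure adds; it could identify points across unstable directions, and then $X$ would be neither a Smale space nor u-resolving over $Y$. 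Indeed, the cautionary example is built into the setup: $(Y,f)$ itself is a quotient of the Smale space $(\Sigma,\sigma)$ by a closed $\sigma$-invariant equivalence relation (the fiber relation of $\rho$), and it is generally \emph{not} a Smale space. So the assertion that ``the bracket descends to a continuous map on $X$ satisfying B1--B4 and C1--C2'' is precisely the content that distinguishes the u-resolving cover from an arbitrary quotient, and it is asserted rather than proved. Lemma \ref{lem:EpsPi} gives compatibility of brackets under $\rho$; it says nothing about the quotient by $\sim$. A further concern is that global stable equivalence within fibers is likely not even the right relation to quotient by: Fisher's identification is at the level of local unstable data (germs), and collapsing points that are stably equivalent but far apart can destroy the local product structure before one even takes closures.

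Two smaller but still real problems: the minimality argument is not a proof --- $\tilde\pi^{-1}$ is not a map, and the claim that any u-resolving extension ``must collapse $\sim_0$'' via a common cover needs the fiber-product construction together with the resolving hypothesis to produce an actual factor map; and the u-resolving verification (``the stable direction is transverse to the unstable direction preserved by the lifts, forcing $x=y$'') is a heuristic, not an argument, since after quotienting you no longer know the stable and unstable sets of $X$ are transverse in any usable sense until you have already established the Smale space structure. Given that the paper's own treatment is a citation, the honest options are either to cite Fisher as the paper does, or to supply the full analysis of the closure of the relation --- which is the bulk of \cite{fisher13} and cannot be compressed to the paragraph you have written.
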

\begin{definition}
The Smale space $(X, \varphi)$ and the factor map $\pi$ together are called \emph{the minimal u-resolving extension} (or cover) of $(Y, f)$ and in a similar way there is a minimal s-resolving extension (or cover). 
\end{definition}
When $(Y, f)$ is a sofic shift, the minimal u-resolving cover is called the left Fischer cover and likewise the minimal s-resolving cover is call the right Fischer cover. The construction of these covers is well established, see \cite[Section 3.3]{lindmarcus} for a detailed elementary development. In the general case of irreducible finitely presented systems, the construction is much more involved and recent (as mentioned above it is due to Fisher \cite{fisher13}). These minimal extensions will play an important role in the present paper.

\begin{definition}
An irreducible finitely presented system $(Y, f)$ is called \emph{an almost Smale space} if there exists a Smale space $(X, \varphi)$, and a factor map $\pi : X \rightarrow Y$ such that $\pi$ is both s and u resolving and almost one-to-one. If in this situation $(Y, f)$ is a sofic shift (and hence $(X, \varphi)$ is shift of finite type), then we say $(Y, f)$ is of \emph{almost finite type}.
\end{definition}
The next result is due to Fisher \cite{fisher13}.
\begin{theorem}
Suppose $(Y, f)$ is an irreducible almost Smale space, $(X, \varphi)$ is a Smale space, and $\pi : X \rightarrow Y$ is both $s$ and $u$-resolving and almost one-to-one. Then $\pi$ is both the minimal $s$ and $u$-resolving extension.
\end{theorem}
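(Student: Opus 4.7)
The plan is to reduce the statement to showing that a certain intermediate factor map is a conjugacy. By the symmetry of the hypotheses between the $s$ and $u$ directions, it suffices to prove that $\pi : X \to Y$ is the minimal $u$-resolving extension; the $s$-resolving case is entirely analogous. Let $\pi_u : (X_u, \varphi_u) \to (Y, f)$ denote the minimal $u$-resolving extension supplied by Theorem \ref{theorem:fp-factor-of-smale}. Since $\pi$ is itself a $u$-resolving and almost one-to-one factor map from a Smale space, the minimality of $\pi_u$ produces a factor map $\tau : X \to X_u$ satisfying $\pi_u \circ \tau = \pi$. The remainder of the proof is devoted to showing that $\tau$ is a topological conjugacy, for then $\pi$ is itself a minimal $u$-resolving extension of $(Y, f)$.

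First I would verify that $\tau$ inherits the resolving properties from $\pi$. If $\tau(x_1) = \tau(x_2)$ with $x_1 \sim_\text{u} x_2$, then $\pi(x_1) = \pi_u(\tau(x_1)) = \pi_u(\tau(x_2)) = \pi(x_2)$, and the $u$-resolving property of $\pi$ forces $x_1 = x_2$; the same argument with $s$ in place of $u$ shows that $\tau$ is also $s$-resolving. In addition, since both $\pi$ and $\pi_u$ are almost one-to-one, their residual sets of singleton fibers in $Y$ meet in a residual (hence nonempty) subset by the Baire category theorem. Picking any $y$ in this intersection and writing $\pi^{-1}(y) = \{x\}$ and $\pi_u^{-1}(y) = \{x_u\}$, the identity $\pi_u \circ \tau = \pi$ forces $\tau(x) = x_u$ and $\tau^{-1}(x_u) \subseteq \pi^{-1}(y) = \{x\}$, so $\tau$ has at least one singleton fiber.

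The main step is to conclude that a factor map $\tau : (X, \varphi) \to (X_u, \varphi_u)$ between irreducible Smale spaces that is both $s$- and $u$-resolving and has at least one singleton fiber must be a conjugacy. The key input is the local product structure: near every $x \in X$ the bracket gives a homeomorphism from a neighborhood of $(x,x)$ in $X^\text{s}(x, \varepsilon) \times X^\text{u}(x, \varepsilon)$ onto a neighborhood of $x$, and the analogous statement holds at $\tau(x) \in X_u$. Since $\tau$ is $s$-resolving it is injective on $X^\text{s}(x, \varepsilon)$, since it is $u$-resolving it is injective on $X^\text{u}(x, \varepsilon)$, and by Lemma \ref{lem:EpsPi} it intertwines the two bracket maps for $\varepsilon$ small enough. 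Combining these three facts shows that $\tau$ is injective on a full neighborhood of every point, hence is a local homeomorphism; by compactness of $X$, $\tau$ is then a finite-to-one covering of $X_u$, so its fiber cardinality is locally constant. Because $\tau$ intertwines $\varphi$ and $\varphi_u$ and $(X_u, \varphi_u)$ is irreducible by Theorem \ref{theorem:fp-factor-of-smale}, this cardinality is $\varphi_u$-invariant, and the standard irreducibility argument (locally constant plus invariant under a topologically transitive homeomorphism) shows it is globally constant. The existence of a singleton fiber then pins the constant to $1$, so $\tau$ is a homeomorphism.

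The hardest part is the local-product step in the third paragraph: combining the separate injectivities of $\tau$ on local stable and local unstable sets, via the bracket-compatibility afforded by Lemma \ref{lem:EpsPi}, to extract injectivity on a full two-dimensional neighborhood. Once the local-homeomorphism property is established, the dynamical propagation of the local degree using irreducibility of $X_u$ is essentially a direct consequence of transitivity, but both halves of the argument rely in an essential way on the Smale space structure of $X$ and $X_u$ rather than just on their being expansive.
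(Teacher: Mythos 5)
First, a point of comparison: the paper does not actually prove this statement --- it is quoted as a result of Fisher and attributed to \cite{fisher13} --- so there is no internal proof to measure your argument against. Your overall architecture is the natural one and matches how one would expect Fisher's result to be proved: use the universal property of the minimal $u$-resolving extension to obtain $\tau : X \to X_u$ with $\pi_u \circ \tau = \pi$, check that $\tau$ inherits both resolving properties and a singleton fibre, and then show such a $\tau$ between Smale spaces is a conjugacy. Your first two paragraphs are correct, and the local injectivity step in the third paragraph does go through: for $z,w$ near $x$ with $\tau(z)=\tau(w)$, Lemma \ref{lem:EpsPi} gives $\tau([x,z]) = [\tau(x),\tau(z)] = [\tau(x),\tau(w)] = \tau([x,w])$, the points $[x,z],[x,w]$ are stably equivalent so $s$-resolving forces $[x,z]=[x,w]$, symmetrically $[z,x]=[w,x]$, and the bracket axioms recover $z=[[z,x],[x,z]]=[[w,x],[x,w]]=w$.

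The genuine gap is the passage from ``$\tau$ is locally injective'' to ``$\tau$ is a finite-to-one covering of $X_u$, so its fibre cardinality is locally constant.'' A locally injective continuous surjection between compact metric spaces need not be open, and openness is exactly what a covering map requires; Smale spaces are not manifolds, so there is no invariance-of-domain argument available. What local injectivity plus compactness actually gives is only \emph{upper} semicontinuity of $y \mapsto |\tau^{-1}(y)|$: the singleton-fibre locus is open, $\varphi_u$-invariant, and (by irreducibility of $X_u$) dense, but this is perfectly consistent with a nowhere dense invariant set of fibres of cardinality $\ge 2$, so the degree is not pinned to $1$. To close the gap you need to show $\tau$ is open, and the standard way to do that is via the nontrivial theorem that an $s$-resolving (resp.\ $u$-resolving) factor map between irreducible Smale spaces is $s$-bijective (resp.\ $u$-bijective), i.e.\ maps $X^\text{s}(x)$ \emph{onto} $X_u^\text{s}(\tau(x))$ --- see \cite{PutLift} and \cite[Section 2.5]{MR3243636}; combined with the local product structure this makes $\tau$ open on product charts, and then your degree argument finishes the proof. (This also requires knowing $X$ is irreducible, or reducing to its irreducible components, which your write-up does not address.) As written, the key quantitative conclusion of the third paragraph is asserted rather than proved.
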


\begin{example}
Recall that the even shift is the sofic shift space in the alphabet $\{\word{0}, \word{1}\}$ obtained from the following labelled graph: 

\vspace{1em}
\begin{center}
\begin{tikzpicture}
\tikzset{every loop/.style={looseness=10, in=130, out=230}}
\pgfmathsetmacro{\S}{0.9}

\node[shape=circle, draw=black] (A) at (0,0) {};
\node[shape=circle, draw=black] (B) at (1,0) {};
\path[->,>=stealth] (A) edge[loop left] node[scale=\S, left] {$\word{1}$} (A);
\path[->,>=stealth] (A) edge[out=60, in=120] node[scale=\S, above] {$\word{0}$} (B);
\path[->,>=stealth] (B) edge[out=240, in=300] node[scale=\S, below] {$\word{0}$} (A);
\end{tikzpicture}    
\end{center}
As in Example \ref{evenShiftExample}, we let $X_G$ be the edge shift coming from the unlabeled graph above and define a map, $\pi$, by taking the map at the shift level induced from mapping defined by taking an unlabeled edge to its label. Then $\pi$ is almost one-to-one, $s$-resolving, and $u$-resolving. Hence, the even shift is of almost finite type.

There are many sofic shifts that are not of almost finite type, see for example \cite[Section 13.1]{lindmarcus}.
\end{example}

\section{Groupoids} \label{SecGroupoid}

We introduce several equivalence relations which capture notions of asymptotic equivalence of elements in an expansive dynamical system $(X,\varphi)$.  The definition of local conjugacy (including the stable and unstable versions) is due to Thomsen \cite{thomsen2010c}.
\subsection{The Local Conjugacy Relation}

As usual, $(X,\varphi)$ is an expansive dynamical system.
\begin{definition}
We say that two points $x,y \in X$ are \emph{locally conjugate}, denoted $x \sim_\lc y$, if there exist two open neighborhoods (in $X$) $U$ and $V$ of $x$ and $y$ respectively, and a homeomorphism $\gamma : U \to V$ such that $\gamma(x) = y$ and \[ \lim_{n \to \pm \infty} \sup_{z \in U} \, d(\varphi^n(z), \varphi^n(\gamma(z))) = 0 \,. \]
We will denote the equivalence class of $x$ under the local conjugacy relation as $X^\lc(x)$, and we will also call the triple $(U,V,\gamma)$ a \emph{local conjugacy} from $x$ to $y$.
\end{definition}

One can show that local conjugacy is an equivalence relation. A crucial fact about local conjugacy is that, given two points that are locally conjugate, then a local conjugacy (that is, the map $\gamma$ in the previous definition) between the two points is essentially unique, see \cite[Lemma 1.4]{thomsen2010c} for the precise statement.

It is worth commenting that the definition of local conjugacy (which is due to Thomsen as mentioned above) is based on work of Ruelle \cite{Ruelle1988}, and the fact that the local conjugacy and homoclinic relations agree for Smale spaces. This makes local conjugacy a suitable relation for generalizing Ruelle's and Putnam's $C^\ast$-algebraic constructions for Smale spaces to the class of all expansive dynamical systems.

Two important fact about local conjugacy are that it respects the property of being synchronizing and it is invariant under $\varphi$. These properties are formally stated in the next two results; both were proved in \cite{DeeSto}.

\begin{prop}\label{prop:lc-preserves-sync}
Let $(X,\varphi)$ be an expansive system and $x,y \in X$.  If $x$ is synchronizing and $x \sim_\lc y$, then $y$ is synchronizing.
\end{prop}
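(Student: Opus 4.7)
The plan is to transport the local product structure from $x$ to $y$ by conjugating through $\gamma$. Let $(U, V, \gamma)$ be a local conjugacy from $x$ to $y$, so $\gamma : U \to V$ is a homeomorphism with $\gamma(x) = y$ and $\sup_{z \in U} d(\varphi^n(z), \varphi^n(\gamma(z))) \to 0$ as $n \to \pm\infty$. Interpreting ``$x$ is synchronizing'' in the sense of \cite{DeeSto} as the statement that the bracket map is defined on a neighborhood of $(x,x)$ in $X \times X$, I would first shrink $U$ (and correspondingly $V$) so that $U \times U \subseteq D_\varepsilon$ for some $0 < \varepsilon \le \varepsilon_X/2$; then $[a,b]$ exists and lies close to $x$ for all $a, b \in U$, by continuity of the bracket (Lemma \ref{lem:DclosedBraCts}).

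The central step is to define a candidate bracket at $y$ by
\[
[a', b']_y := \gamma\!\left(\bigl[\gamma^{-1}(a'),\, \gamma^{-1}(b')\bigr]\right)
\]
for $a', b'$ in a sufficiently small neighborhood of $y$ inside $V$, and to verify that this point actually lies in $X^\text{s}(a', \varepsilon') \cap X^\text{u}(b', \varepsilon')$ for some uniform $\varepsilon'$. Writing $z = [\gamma^{-1}(a'), \gamma^{-1}(b')]$, we have $z \sim_\text{s} \gamma^{-1}(a')$ and $z \sim_\text{u} \gamma^{-1}(b')$ by the properties of the bracket map. The uniform asymptotic condition defining local conjugacy then forces $\gamma(z) \sim_\text{s} a'$ and $\gamma(z) \sim_\text{u} b'$ via the triangle inequality. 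Uniqueness in Lemma \ref{lem:bracket-unique-intersection} would then identify $\gamma(z)$ with the bracket at $y$, establishing that $y$ has local product structure and hence is synchronizing.

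The main obstacle is \emph{quantitative}: global stable and unstable equivalence are formal consequences of local conjugacy, but membership in $D_{\varepsilon'}$ requires the local stable and unstable inequalities to hold at a uniform $\varepsilon'$ for all pairs $(a', b')$ in a neighborhood of $(y, y)$. I would handle this by splitting the orbit according to $|n|$: for $|n| \ge N$, the uniform convergence built into the local conjugacy definition, together with the local stable/unstable estimates on $[\gamma^{-1}(a'), \gamma^{-1}(b')]$, controls $d(\varphi^{\pm n}(\gamma(z)), \varphi^{\pm n}(a'))$ and $d(\varphi^{\pm n}(\gamma(z)), \varphi^{\pm n}(b'))$; for $|n| < N$, I would use continuity of $\varphi^n$ on compact sets and further shrink the neighborhood of $y$ to keep distances below $\varepsilon'$. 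Choosing $N$ first, then shrinking, yields the required uniformity.

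Finally, using the essential uniqueness of the local conjugacy map (\cite[Lemma 1.4]{thomsen2010c}) one sees that the construction is independent of choices, which is reassuring though not strictly needed for the proposition itself.
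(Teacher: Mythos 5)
The paper does not actually prove this proposition in situ---it is imported from \cite{DeeSto}---but your argument (pulling $a',b'$ back through $\gamma$, bracketing near $x$, pushing forward, and verifying membership in $X^\text{s}(a',\varepsilon')\cap X^\text{u}(b',\varepsilon')$ by splitting the orbit into $|n|\ge N$, where the uniform convergence in the definition of local conjugacy applies, and $|n|<N$, handled by continuity after shrinking the neighborhood of $y$) is essentially the proof given there. The only step worth making explicit is that after shrinking you need $[\gamma^{-1}(a'),\gamma^{-1}(b')]$ to land back in $U$, so that $\gamma$ may be applied to it and the uniform estimate $\sup_{w\in U}d(\varphi^n(w),\varphi^n(\gamma(w)))\to 0$ invoked for $w=[\gamma^{-1}(a'),\gamma^{-1}(b')]$; this follows from continuity of the bracket together with $[x,x]=x$, and with that noted your proposal is correct.
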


\begin{lemma}\label{lem:lc-invariant-under-map}
Let $(X,\varphi)$ be an expansive system and $x,y \in X$.  Then $x \sim_\lc y$ if and only if $\varphi(x) \sim_\lc \varphi(y)$.
\end{lemma}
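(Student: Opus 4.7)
The statement is symmetric in direction, so I would prove the forward implication and note that the reverse follows by the same argument applied with $\varphi^{-1}$ in place of $\varphi$. The plan is to transport a local conjugacy from $(x,y)$ to $(\varphi(x),\varphi(y))$ by conjugating it by $\varphi$.

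More concretely, suppose $x \sim_\lc y$, witnessed by open neighborhoods $U$ of $x$ and $V$ of $y$ together with a homeomorphism $\gamma:U\to V$ satisfying $\gamma(x)=y$ and
\[ \lim_{n\to\pm\infty}\sup_{z\in U} d(\varphi^n(z),\varphi^n(\gamma(z)))=0. \]
I would set $U' = \varphi(U)$, $V' = \varphi(V)$ (both open, since $\varphi$ is a homeomorphism) and define
\[ \gamma' = \varphi\circ\gamma\circ\varphi^{-1} : U' \to V', \]
which is a homeomorphism with $\gamma'(\varphi(x)) = \varphi(\gamma(x)) = \varphi(y)$.

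The only nontrivial step is checking the limit condition for $(U',V',\gamma')$. Every $w\in U'$ has the form $w=\varphi(z)$ for a unique $z\in U$, and then $\gamma'(w)=\varphi(\gamma(z))$. Hence for each $n\in\Z$,
\[ \sup_{w\in U'} d(\varphi^n(w),\varphi^n(\gamma'(w))) \;=\; \sup_{z\in U} d(\varphi^{n+1}(z),\varphi^{n+1}(\gamma(z))), \]
which is just a re-indexing of the original supremum. Since the original expression tends to $0$ as $n\to\pm\infty$, so does the new one. Thus $(U',V',\gamma')$ is a local conjugacy from $\varphi(x)$ to $\varphi(y)$, proving $\varphi(x)\sim_\lc\varphi(y)$.

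For the converse, I would simply observe that replacing $\varphi$ by $\varphi^{-1}$ throughout the argument (using $U''=\varphi^{-1}(U)$, $\gamma''=\varphi^{-1}\circ\gamma\circ\varphi$, and again a re-indexing of $n$) shows that $\varphi(x)\sim_\lc\varphi(y)$ implies $x\sim_\lc y$. There is no real obstacle here; the lemma is essentially a formal consequence of the fact that $\varphi$ is a homeomorphism and that the defining limit runs over both $n\to+\infty$ and $n\to-\infty$, making the condition insensitive to a shift in the index.
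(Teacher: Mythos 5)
Your proof is correct, and the conjugation-by-$\varphi$ argument with the re-indexing of the supremum is exactly the natural route; the paper itself does not reprove this lemma but defers to \cite{DeeSto}, where the same idea is used. Nothing is missing: the bidirectional limit $n\to\pm\infty$ makes the index shift harmless in both directions, as you observe.
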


\subsection{The Stable/Unstable Local Conjugacy Relations}

We also have the notions of local conjugacy that only hold in the limit in one direction.  Similar to local conjugacy, these respectively imply stable and unstable equivalence. The converse is not true; the even shift can be used to show this and the details similar to \cite[Remark 1.13]{thomsen2010c}.

\begin{definition}
Let $(X,\varphi)$ be an expansive dynamical system with $x,y \in X$.  Suppose $x \in U \subseteq X^u(x)$ is open, $y \in V\subseteq X^u(y)$ is open, and $\gamma : U \to V$ is a homeomorphism onto its image such that $\gamma(x) = y$ and \[ \lim_{n \to \infty} \sup_{z \in U} d(\varphi^n(z), \varphi^n(\gamma(z))) = 0 \,. \]  Then $\gamma$ is a called a \emph{stable local conjugacy} and we say that $x \sim_\lcs y$.  We will denote the stable local conjugacy equivalence class of $x$ as $X^\lcs(x)$.
\end{definition}

\begin{definition}
Let $(X,\varphi)$ be an expansive dynamical system with $x,y \in X$.  Suppose $x \in U \subseteq X^s(x)$ is open, $y \in V\subseteq X^s(y)$ is open and $\gamma : U \to V$ is a homeomorphism onto its image such that $\gamma(x) = y$ and \[ \lim_{n \to \infty} \sup_{z \in U} d(\varphi^{-n}(z), \varphi^{-n}(\gamma(z))) = 0 \,. \]  Then $\gamma$ is a called an \emph{unstable local conjugacy} and we say that $x \sim_\lcu y$.  We will denote the ustable local conjugacy equivalence class of $x$ as $X^\lcu(x)$.
\end{definition}

These two relations are indeed equivalence relations, see \cite{DeeSto} for details.  Also notice that $x \sim_\lc y$ implies $x \sim_\lcs y$ and $x \sim_\lcu y$. Both stable and unstable local conjugacy is (in a certain sense) unique, see \cite[Lemma 1.4]{thomsen2010c} for the precise formulation. The reader can find more on these equivalence relations in either \cite{DeeSto} or \cite{thomsen2010c}. 

We will use the following lemma later in the paper; it was proved in \cite{DeeSto}.

\begin{lemma}\label{lem:rect-nbhd-implies-lcu-lcs}
Let $(X,\varphi)$ be a synchronizing system with $x \in X$ synchronizing.  Then if $R$ is a product neighborhood of $x$ and $y \in \interior(R)$, we have
\begin{enumerate}[(i)]
    \item $y \sim_\lcu [x,y] \sim_\lcs x$, and
    \item $y \sim_\lcs [y,x] \sim_\lcu x$.
\end{enumerate}
\end{lemma}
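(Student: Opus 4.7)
The plan is to construct all four local conjugacies explicitly from the bracket map and the adapted metric of Section \ref{sec:MetExpSys}; part (ii) is obtained from part (i) by interchanging the roles of stable and unstable throughout, so I focus on (i).

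For $y \sim_\lcu [x, y]$, I would set $\gamma(z) = [x, z]$ on $U := X^\text{s}(y, \epsilon) \cap \interior(R)$, with $\epsilon$ chosen so that every bracket below is defined. The image lies in $X^\text{s}([x,y])$ because $[x, z] \sim_\text{s} x \sim_\text{s} [x, y]$, and $\gamma(y) = [x, y]$. Continuity is Lemma \ref{lem:DclosedBraCts}, and the local inverse is $w \mapsto [y, w]$: axiom B2 gives $[x, [y, w]] = [x, w] = w$ when $w \in X^\text{s}(x, \epsilon)$, and $[y, [x, z]] = [y, z] = z$ when $z \in X^\text{s}(y, \epsilon)$, so $\gamma$ is a homeomorphism onto an open neighborhood of $[x, y]$ in the inductive-limit topology on $X^\text{s}([x, y])$. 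For backward convergence, shrink $R$ so that $[x, z] \in X^\text{u}(z, \eta)$ for every $z \in U$, which is possible because $[x, x] = x$ and the bracket is continuous. The adapted metric then yields
\[
d(\varphi^{-n}(z), \varphi^{-n}([x, z])) \le \lambda^n d(z, [x, z]),
\]
with the right-hand side uniformly bounded on $U$, so the supremum tends to $0$.

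The companion conjugacy $[x, y] \sim_\lcs x$ is obtained symmetrically with $\gamma'(w) = [w, x]$ on a neighborhood of $[x, y]$ in $X^\text{u}([x, y])$. Axiom B3 gives $\gamma'([x, y]) = [[x, y], x] = [x, x] = x$, the image sits in $X^\text{u}(x)$, and the local inverse is $v \mapsto [v, y]$ (verified via B3 in the same way). Forward convergence follows from $[w, x] \sim_\text{s} w$ together with the stable-side contraction of the adapted metric, exactly as above. Part (ii) is the mirror image, with $z \mapsto [z, x]$ realizing $y \sim_\lcs [y, x]$ and $w \mapsto [x, w]$ realizing $[y, x] \sim_\lcu x$.

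The main obstacle is bookkeeping: one must nest three or four shrinking neighborhoods so that every bracket is defined, every composition collapses via B2 or B3 to an identity, and every relevant pair lies in the local stable or unstable ball of radius $\eta$ where the adapted metric contracts. Once this nesting is in place, no idea beyond the bracket axioms and the contraction property of the adapted metric is required.
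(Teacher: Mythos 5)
Your construction is correct: realizing all four local conjugacies as bracket maps $z \mapsto [x,z]$, $w \mapsto [w,x]$, etc., with inverses supplied by axioms B2/B3 and the identities $[y,z]=z$ for $z \in X^\text{s}(y,\epsilon)$, and deducing the uniform convergence from the contraction of the adapted metric on local stable/unstable sets, is exactly the standard argument, and the bookkeeping you defer (nesting neighborhoods so every bracket is defined and every pair lies in the $\eta$-ball) is genuinely routine. Note that this paper does not actually prove the lemma---it only cites \cite{DeeSto} for it---so there is no in-paper proof to compare against, but your route is the expected one.
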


\section{$C^\ast$-Algebras from Expansive Dynamical Systems}\label{c-star-algebras}

In this section we review the construction of $C^\ast$-algebras from the equivalence relations in the previous section. The general idea of this construction can be found in the work of Thomsen \cite{thomsen2010c}, who builds on work of Putnam and Spielberg \cite{putnam_1996, putnam99}. However, there are a few difference, see our previous work in \cite{DeeSto} and Remark \ref{rem:differentalgebras} below.  

The first $C^\ast$-algebra is constructed from the local conjugacy relation for an expansive dynamical system. This construction is due to Thomsen \cite{thomsen2010c}. We first define the groupoid \[ G^\lc(X, \varphi) = \left\{ (x,y) \in X \times X \mid x \sim_\lc y \right\} \] with groupoid composition given by $(x,y)(y',z) = (x,z)$ whenever $y = y'$.  However, instead of topologizing $G^\lc(X, \varphi)$ as a subspace of $X \times X$, the topology is generated from the sets of the form $\{(z,\gamma(z)) \mid z \in U\}$ where $(U, V, \gamma)$ is a local conjugacy. With this topology, $G^\lc(X, \varphi)$ is an \'{e}tale groupoid, see \cite[Theorem 1.7]{thomsen2010c} for details.

\begin{definition}
The \emph{homoclinic algebra} $A(X,\varphi)$ of an expansive dynamical system $(X,\varphi)$ is the reduced $C^\ast$-algebra $C_r^\ast(G^\lc(X, \varphi))$ (where the reduced $C^\ast$-algebra of an \'etale groupoid is defined by Renault in \cite{renault1980groupoid}). 
\end{definition}

If $(X,\varphi)$ is a Smale space, then the $C^\ast$-algebra $A(X,\varphi)$ is exactly the asymptotic (i.e., homoclinic) algebra defined by Putnam in \cite{putnam_1996}. 

\begin{definition}\label{def:g-invariant}
Suppose $G$ is a locally compact Hausdorff \'{e}tale groupoid.  Then $X \subseteq G^0$ is called \emph{$G$-invariant} if for any $\gamma \in G$, $r(\gamma) \in X$ if and only if $s(\gamma) \in X$, 
\end{definition}

\begin{definition}
Let $\mathcal{I}_\text{sync}(X,\varphi)$ be the reduced groupoid $C^\ast$-algebra obtained from 
\[ \{ (x, y) \in X_\text{sync} \times X_\text{sync} \mid x \sim_{lc} y \}. \]
\end{definition}

Using Proposition \ref{prop:lc-preserves-sync} and Definition \ref{def:g-invariant}, we have the following result.

\begin{theorem}\label{theorem:sync-ideal} (\cite[Section 5.4]{DeeSto})
Let $(X,\varphi)$ be a synchronizing system with $A(X,\varphi)$ its homoclinic algebra, then $\mathcal{I}_\text{sync}(X,\varphi) \subseteq A(X,\varphi)$ is an ideal.
\end{theorem}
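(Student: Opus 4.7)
The plan is to realize $\mathcal{I}_\text{sync}(X,\varphi)$ as the reduced groupoid C$^\ast$-algebra attached to the reduction of $G^\lc(X,\varphi)$ to the set of synchronizing points. The general mechanism is classical: given a locally compact Hausdorff \'etale groupoid $G$ and an open $G$-invariant subset $U$ of the unit space $G^0$, the reduction $G|_U = s^{-1}(U) = r^{-1}(U)$ is an open \'etale subgroupoid whose reduced C$^\ast$-algebra embeds as a (closed two-sided) ideal of $C_r^\ast(G)$ via the extension-by-zero map $C_c(G|_U) \hookrightarrow C_c(G)$. I will apply this to $G = G^\lc(X,\varphi)$ and $U = X_\text{sync}$, the set of synchronizing points of $(X,\varphi)$.

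The construction proceeds in three short steps. First, I must check that $X_\text{sync}$ is an open subset of $X$, viewed as the unit space of $G^\lc(X,\varphi)$ (since the unit space, as a topological subspace of $G^\lc(X,\varphi)$, is canonically homeomorphic to $X$ itself). This is really a statement about synchronizing systems and should be drawn directly from the definition in \cite{DeeSto}: a point is synchronizing precisely when it admits a product neighborhood, and this is manifestly an open condition, so $X_\text{sync}$ is open in $X$. Second, I must verify $G^\lc$-invariance of $X_\text{sync}$: given any $(x,y) \in G^\lc(X,\varphi)$ we have $r(x,y) = x$ and $s(x,y) = y$, with $x \sim_\lc y$; Proposition~\ref{prop:lc-preserves-sync} says that synchronizing is preserved under local conjugacy, so $x \in X_\text{sync}$ if and only if $y \in X_\text{sync}$, which is exactly Definition~\ref{def:g-invariant}. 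Third, I form
\[
\mathcal{I}_\text{sync}(X,\varphi) := C_r^\ast\bigl(G^\lc(X,\varphi)\big|_{X_\text{sync}}\bigr)
\]
and invoke the general Renault machinery to conclude that this embeds as an ideal in $A(X,\varphi) = C_r^\ast(G^\lc(X,\varphi))$.

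There is no deep obstacle: the hardest piece is making sure that $X_\text{sync}$ is open, but this is immediate from the product-neighborhood characterization of synchronizing points established in the first paper of the series. One minor point worth addressing in the write-up is that the topology on $G^\lc(X,\varphi)$ is \emph{not} the subspace topology from $X \times X$; nevertheless, the basic neighborhoods $\{(z,\gamma(z)) : z \in U\}$ coming from local conjugacies $(U,V,\gamma)$ restrict on the diagonal to the ordinary open sets of $X$, so the identification $G^\lc(X,\varphi)^{(0)} \cong X$ is a homeomorphism and openness in $X$ is the correct condition. Once that identification is in hand, the reduction $G^\lc(X,\varphi)|_{X_\text{sync}}$ is automatically an open \'etale subgroupoid, and the ideal property of its reduced C$^\ast$-algebra is a direct application of the standard result for \'etale groupoids with open invariant unit subsets.
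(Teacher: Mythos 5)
Your proposal is correct and is essentially the argument intended by the paper: the theorem is quoted from \cite[Section 5.4]{DeeSto}, where the ideal is obtained exactly as the reduced $C^\ast$-algebra of the reduction of $G^\lc(X,\varphi)$ to the open (Proposition \ref{prop:lc-preserves-sync} supplies invariance, and openness of the synchronizing set comes from the product-neighborhood characterization) invariant subset of synchronizing points, followed by the standard Renault ideal construction. The placement of Definition \ref{def:g-invariant} immediately before the theorem confirms this is the route the authors have in mind, so no further comparison is needed.
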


The next two $C^\ast$ algebras we introduce are called the \emph{synchronizing heteroclinic} algebras. Since we are specializing to the case of synchronizing systems, we follow \cite{DeeSto} where we made some modifications to Thomsen's construction in \cite{thomsen2010c}.  In particular, Thomsen uses the set of all \emph{post-periodic} points, which is defined as the set \[ X^\text{u} = \bigcup_{p \in \Per(X,\varphi)} X^\text{u}(p) \,. \]  This involves using all the periodic points in the dynamical system, and additionally requires the assumption that periodic points are dense.

However, in synchronizing systems, there are many periodic points that are synchronizing and thus behave like the periodic points in a Smale space. Based on this fact, we restrict Thomsen's construction to only these synchronizing periodic points.  In fact, the synchronizing heteroclinic algebras associated to different choices of synchronizing periodic points are all Morita equivalent (see \cite{DeeSto}) which parallels the Smale space result of Putnam and Spielberg \cite{putnam99}. For more details on the construction of these algebras the reader can see \cite{DeeSto}.

\begin{definition}
Let $(X,\varphi)$ be a synchronizing system and let $P \subseteq X$ be a finite set of synchronizing periodic points.  Then we define the \'{e}tale groupoids
\begin{enumerate}[(i)]
    \item $G^\lcs(X, \varphi, P) = \{ (x, y) \in X^\text{u}(P) \times X^\text{u}(P) \mid x \sim_\lcs y \}$, and
    \item $G^\lcu(X, \varphi, P) = \{ (x, y) \in X^\text{s}(P) \times X^\text{s}(P) \mid x \sim_\lcu y \}$.
\end{enumerate}
Note that we are using the inductive limit topology on $X^\text{u}(P)$ and $X^\text{s}(P)$ as discussed just after Lemma \ref{lem:stable-unstable-inductive-limit-top}. These groupoids are topologized from the stable and unstable local conjugacies respectively in the same way as with the homoclinic algebra.  Note that the unit space of $G^\lcs(X, \varphi, P)$ is $X^\text{u}(P)$ and the unit space of $G^\lcu(X, \varphi, P)$ is $X^\text{s}(P)$.
\end{definition}

\begin{definition}
Let $(X,\varphi)$ be a synchronizing system and let $P \subseteq X$ be a finite set of synchronizing periodic points.  Then we define
\begin{enumerate}[(i)]
    \item the \emph{stable synchronizing heteroclinic algebra}, $S(X,\varphi,P) = C^\ast_r(G^\lcs(X, \varphi, P))$, and
    \item the \emph{unstable synchronizing heteroclinic algebra}, $U(X,\varphi,P) = C^\ast_r(G^\lcu(X, \varphi, P))$
\end{enumerate}
which are both reduced groupoid $C^\ast$-algebras in the sense of Renault \cite{renault1980groupoid}.
\end{definition}

In the case of $(X,\varphi)$ being a Smale space, the stable and unstable synchronizing heteroclinic algebras are exactly the stable and unstable algebras defined by Putnam \cite{putnam99}. Like in the Smale space case, we have that $S(X,\varphi,P)$ and $U(X, \varphi,P)$ only depend on $P$ up to Morita equivalence by \cite[Theorem 5.5]{DeeSto}.

\begin{remark} \label{rem:differentalgebras}
The algebras $S(X,\varphi,P)$ and $U(X, \varphi, P)$ are different than the ones Thomsen's constructed in \cite{thomsen2010c}. We call the algebras Thomsen constructed the heteroclinic algebras. The different between our construction and Thomsen's can be see explicitly in the case of the even shift, which is discussed in detail in \cite[Section 5]{DeeStoShift}.
\end{remark}

\section{The Ruelle algebras} \label{sec:Ruelle}

Throughout this section, $(X, \varphi)$ is a synchronizing system. Following \cite{putnam99} (where the Smale space case was considered) we let
\begin{align*}
G^{lcs}(X, \varphi, P) \rtimes \Z & = \{ (x_1, n , x_2) \mid n\in \Z \hbox{ and }(\varphi^n(x_1), x_2) \in G^{lcs}(X, \varphi, P) \} \\
G^{lcu}(X, \varphi, P) \rtimes \Z & = \{ (x_1, n , x_2) \mid n\in \Z \hbox{ and }(\varphi^n(x_1), x_2) \in G^{lcu}(X, \varphi, P) \} \\
G^{lc}(X, \varphi) \rtimes \Z & = \{ (x_1, n , x_2) \mid n\in \Z \hbox{ and }(\varphi^n(x_1), x_2) \in G^{lc}(X, \varphi) \}.
\end{align*}
In each case, the groupoid structure is defined as follows:
\[
(x_1, n , x_2) \cdot (x_1^{\prime}, n^{\prime}, x_2^{\prime}) = \left\{ \begin{array}{cc} (x_1, n+n^{\prime}, x_2^{\prime}) & x_2=x_1^{\prime} \\ \hbox{not defined} & \hbox{ otherwise} \end{array} \right.
\]
and
\[
(x_1, n, x_2)^{-1}= (x_2, -n, x_1)
\]
The synchronizing ideal, $\mathcal{I}_\text{sync}(X,\varphi)$, is invariant under the action induced by $\varphi$, so we get an ideal $\mathcal{I}_\text{sync}(X,\varphi) \rtimes \Z \subseteq C^*_r(G^{lc}(X, \varphi) \rtimes \Z)$.
\begin{definition}
Suppose that $(X, \varphi)$ is a mixing synchronizing system. The $C^*$-algebras 

\end{definition}
\begin{proposition}
Suppose that $(X, \varphi)$ is a mixing synchronizing system. Then the $C^*_r(G^{lcs}(X, \varphi, P) \rtimes \Z)$, $C^*_r(G^{lcu}(X, \varphi, P) \rtimes \Z$, and $C^*_r(G^{lc}(X, \varphi) \rtimes \Z)$ are called the Ruelle algebras associated to the system $(X, \varphi)$.
\begin{enumerate}
\item if the conditions in Lemma 4.19 in \cite{DeeSto} hold, then $G^{lcs}(X, \varphi, P) \rtimes \Z$ and $G^{lcu}(X, \varphi, P) \rtimes \Z$ are minimal; 
\item $G^{lcs}(X, \varphi, P) \rtimes \Z$ and $G^{lcu}(X, \varphi, P) \rtimes \Z$ are essentially principal;
\item if $G^{lcs}(X, \varphi, P)$ and $G^{lcu}(X, \varphi, P)$ are amenable, then $G^{lcs}(X, \varphi, P) \rtimes \Z$ and $G^{lcu}(X, \varphi, P) \rtimes \Z$ are amenable;
\item if $G^{lc}(X, \varphi)$ is amenable, then $G^{lc}(X, \varphi) \rtimes \Z$ is amenable.
\end{enumerate}
\end{proposition}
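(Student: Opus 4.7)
The plan is to verify the four items separately, addressing the easier parts first. Parts (3) and (4) follow from the general principle that the crossed product of an amenable étale groupoid by a $\Z$-action by groupoid automorphisms is itself amenable, since $\Z$ is amenable. In our setting the $\Z$-action on each of $G^\lcs(X,\varphi,P)$, $G^\lcu(X,\varphi,P)$, and $G^\lc(X,\varphi)$ is induced by the diagonal homeomorphism $\varphi \times \varphi$, suitably restricted to the respective unit spaces, so the hypotheses of the standard groupoid amenability results apply and the conclusions are immediate.

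For part (1), I would first identify the orbits in the unit space $X^u(P)$ of $G^\lcs(X,\varphi,P) \rtimes \Z$. For $x \in X^u(P)$ the orbit is
\[
\mathrm{Orb}(x) \;=\; \bigcup_{n \in \Z} X^\lcs(\varphi^n(x)),
\]
using the fact that stable local conjugacy is preserved by $\varphi$ (the lcs-analogue of Lemma \ref{lem:lc-invariant-under-map}). Minimality amounts to showing that this union is dense in $X^u(P)$ for every $x$. I would combine the conclusion of Lemma 4.19 of \cite{DeeSto} (which under the stated hypotheses gives density of appropriate lcs-classes among synchronizing points in $X^u(P)$) with mixing of $(X,\varphi)$ to propagate the density from a single lcs-class to the full $\Z$-saturated orbit: mixing lets us intersect an arbitrary open subset of $X^u(P)$ with some $\varphi^n$-translate of a neighborhood containing $X^\lcs(x)$. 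The $G^\lcu \rtimes \Z$ statement is symmetric, using $\varphi^{-1}$ in place of $\varphi$.

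For part (2), the groupoid $G^\lcs(X,\varphi,P) \rtimes \Z$ is essentially principal provided the set of units $y \in X^u(P)$ whose isotropy $\{n \in \Z : \varphi^n(y) \sim_\lcs y\}$ is trivial (equal to $\{0\}$) is dense in $X^u(P)$. For each $n \neq 0$ I would argue that $F_n := \{y \in X^u(P) : \varphi^n(y) \sim_\lcs y\}$ has empty interior: a $y$ lying in the interior of $F_n$ would, by the essential uniqueness of stable local conjugacies (\cite[Lemma 1.4]{thomsen2010c}), force $\varphi^n$ to coincide with a specified stable local conjugacy on a whole open piece of an unstable set through $y$, and combined with mixing this leads to a contradiction (for example, by producing a non-periodic point satisfying a periodicity-type relation across an open set). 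A Baire-category argument on the countable union $\bigcup_{n \neq 0} F_n$ then yields density of units with trivial isotropy, and the $G^\lcu \rtimes \Z$ case is symmetric. The main obstacle is precisely this step in part (2): controlling the rigidity forced by lying in the interior of $F_n$ and checking that the argument goes through in the inductive-limit topology on $X^u(P)$ (rather than the subspace topology from $X$) is the most delicate point; parts (3) and (4) are formal and part (1) is largely bookkeeping once Lemma 4.19 of \cite{DeeSto} is invoked.
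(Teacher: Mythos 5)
Your treatment of items (1), (3) and (4) matches the paper in substance: the amenability statements are exactly the observation that amenability passes to crossed products by $\Z$, and for minimality the paper likewise simply invokes the relevant lemma of \cite{DeeSto} (together with \cite[Lemma 5.1]{putnam99} in the Smale space case), so your orbit-density sketch is consistent with the intended argument. The problem is item (2), where you take a genuinely different route from the paper and the key step does not go through as described. You want to show each $F_n=\{y\in X^u(P): \varphi^n(y)\sim_\lcs y\}$ is negligible via a rigidity argument: that a point $y$ in the interior of $F_n$ would ``force $\varphi^n$ to coincide with a specified stable local conjugacy on a whole open piece of an unstable set.'' The uniqueness statement in \cite[Lemma 1.4]{thomsen2010c} gives uniqueness of the germ of a conjugacy between the two \emph{fixed} points $y$ and $\varphi^n(y)$; it does not identify $\gamma(z)$ with $\varphi^n(z)$ for nearby $z$. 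Indeed, for $z$ near $y$ the condition $z\in F_n$ involves some conjugacy from $z$ to $\varphi^n(z)$, while the conjugacy $\gamma$ witnessing $y\sim_\lcs\varphi^n(y)$ sends $z$ to $\gamma(z)$, and $\gamma(z)\sim_\lcs\varphi^n(z)$ does not force $\gamma(z)=\varphi^n(z)$ (distinct points can be $\lcs$-equivalent). So the contradiction you hope to extract from mixing is not set up. Moreover, even granting that each $F_n$ has empty interior, your Baire-category step needs the $F_n$ (or their closures) to be nowhere dense in the inductive-limit topology on $X^u(P)$, and you do not address whether $F_n$ is closed there.

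The paper avoids all of this by following \cite[proof of 1.4, Lemmas 5.2--5.4]{putnam99}, whose arguments use only expansiveness: if $(x,n,x)$ lies in the isotropy with $n\neq 0$, then $\varphi^n(x)\sim_{s} x$ and $x\in X^u(p)$ for a periodic $p$, which places $x$ in an intersection of a global stable set of a periodic point with a global unstable set of a periodic point; such intersections are at most countable (local stable and unstable sets meet in at most one point, and there are countably many periodic points). Hence the set of units with nontrivial isotropy is countable, and since the unit space has no isolated points (the system is mixing and infinite), its complement is dense. This yields a strictly stronger conclusion (countability rather than meagerness) and sidesteps both the rigidity issue and the topological subtleties of the inductive-limit topology. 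To repair your write-up, replace the interior-of-$F_n$ argument with this countability argument, or else supply a genuine proof that each $F_n$ is nowhere dense.
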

\begin{proof}
We only consider the case of $G^{lcs}(X, \varphi, P) \rtimes \Z$ as the other arguments are similar. The minimal statement follows from \cite[Lemma 4.9]{DeeSto} (also see \cite[Lemma 5.1]{putnam99} in the Smale space case). 

For the essentially principal statement, given $(x, 0, x)$ in the unit space of the groupoid $G^{lcs}(X, \varphi, P) \rtimes \Z$, we have that
\[
{\rm Iso}(x, 0, x)= \{ (x, n, x) \in G^{lcs}(X, \varphi, P) \rtimes \Z \}.
\]
Still following \cite[Proof of 1.4 on page 20]{putnam99}, we have that ${\rm Iso}(x,0,x)$ is non-trivial for at most countably many $x$ and hence the groupoid, $G^{lcs}(X, \varphi, P) \rtimes \Z$ is essentially principal. (The reader should note that the proofs of Lemmas 5.2, 5.3, and 5.4 in \cite{putnam99} only use the fact that $(X, \varphi)$ is expansive, not that it is a Smale space).

Finally, the amenable statements follows because amenablity is preserved when taking crossed products by $\Z$.
\end{proof}
\begin{remark}
It is currently unclear if $G^{lcs}(X, \varphi, P)$, $G^{lcu}(X, \varphi, P)$, and $G^{lc}(X, \varphi)$ are amenable in general and hence likewise for the crossed products by $\Z$. 
\end{remark}
\begin{definition}
A topological groupoid $G$ is called \emph{locally contracting} if, for each non-empty open subset $U \subseteq G^{0}$, there exists an open $G$-set, $\Delta$, such that
\[ r(\bar{\Delta}) \subset s(\Delta) \subseteq U \hbox{ and }r(\bar{\Delta}) \neq s(\Delta).
\]
\end{definition}
\begin{proposition}
Suppose that $(X, \varphi)$ is a mixing synchronizing system. Then the groupoids $G^{lcs}(X, \varphi, P) \rtimes \Z$ and $G^{lcu}(X, \varphi, P) \rtimes \Z$ are locally contracting.
\end{proposition}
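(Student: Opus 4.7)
My plan is to handle $G^{lcs}(X,\varphi,P)\rtimes\Z$ directly; the case of $G^{lcu}(X,\varphi,P)\rtimes\Z$ will follow by a symmetric argument with $\varphi$ and $\varphi^{-1}$ interchanged. Following Putnam--Spielberg's treatment of the Smale space case \cite{putnam99}, I will build, for each non-empty open $U\subseteq X^u(P)$, an open bisection of the form
\[
\Delta = \{(\varphi^{-m}(z),m,z) : z \in V\},
\]
where $m$ is the period of a periodic point $p\in P$ with $y \in X^u(p)$ for some chosen $y \in U$, and $V\subseteq U$ is a suitably small open neighborhood of $y$ in the inductive limit topology on $X^u(P)$. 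Each triple lies in the groupoid because $\varphi^m(\varphi^{-m}(z)) = z \sim_\lcs z$ trivially, and $\Delta$ is an open $G$-set since the source map restricts to a homeomorphism onto $V$.

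I will choose $V$ explicitly as a basic open set $V = \varphi^N(X^u(\tilde y,\epsilon_0))$ with $\tilde y := \varphi^{-N}(y)$, for $N$ a positive multiple of $m$ taken sufficiently large and $\epsilon_0 > 0$ sufficiently small; such a neighborhood basis at $y$ is furnished by Lemma \ref{lem:stable-unstable-inductive-limit-top}. Since $y \in X^u(p)$ and $\varphi^{-1}$ contracts local unstable sets by the factor $\lambda < 1$ of the adapted metric (Section \ref{sec:MetExpSys}), $d(\tilde y, p)\to 0$ as $N \to \infty$ along multiples of $m$. Using $\varphi$-equivariance and the contraction on local unstable sets gives
\[
\varphi^{-m}(V) \subseteq \varphi^N\!\left(X^u(\varphi^{-m}(\tilde y),\lambda^m\epsilon_0)\right).
\]
A triangle-inequality argument in the unstable metric then yields $X^u(\varphi^{-m}(\tilde y),\lambda^m\epsilon_0) \subsetneq X^u(\tilde y,\epsilon_0)$ whenever $d(\tilde y, p) < (1-\lambda^m)\epsilon_0/(1+\lambda^m)$. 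The strict inclusion will persist after taking closures and will survive the application of $\varphi^N$, delivering $r(\bar\Delta) = \varphi^{-m}(\bar V) \subsetneq V = s(\Delta) \subseteq U$, which is exactly the locally contracting condition.

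The main obstacle will be achieving the two parameter choices simultaneously: $N$ must be large enough that $d(\tilde y, p)$ satisfies the smallness condition above, while $\epsilon_0$ must be small enough that $V\subseteq U$. Both are controlled using the inductive limit structure of $X^u(P)$ together with the Lipschitz bounds on $\varphi^{\pm 1}$ from Section \ref{sec:MetExpSys}, but striking this balance requires care since the forward Lipschitz constant of $\varphi$ threatens to expand iterated local unstable neighborhoods back out of $U$ as $N$ grows.
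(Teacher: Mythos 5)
There is a fatal flaw in your choice of bisection, not merely a delicate balance of constants. Your $\Delta = \{(\varphi^{-m}(z),m,z) : z \in V\}$ has $s(\Delta) = V$ and $r(\Delta) = \varphi^{-m}(V)$, so the locally contracting condition forces $\varphi^{-m}(V) \subseteq V$ and hence $\varphi^{-km}(y) \in V$ for every $k \geq 0$. Since $y \in X^u(p)$ and $\varphi^{-m}(p) = p$, the points $\varphi^{-km}(y)$ converge to $p$, so $p \in \overline{V} \subseteq \overline{U}$. But $U$ is an \emph{arbitrary} nonempty open subset of $X^u(P)$ and need not come anywhere near $p$; indeed, with your explicit choice $V = \varphi^N\left(X^u(\tilde y,\epsilon_0)\right)$ and $\tilde y \in X^u(p,\delta)$ for $\delta < \epsilon_0$, one checks that $p \in V$ outright. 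So the two requirements you flag as ``requiring care'' --- that $V \subseteq U$ and that $d(\tilde y,p) < (1-\lambda^m)\epsilon_0/(1+\lambda^m)$ --- are not just in tension: they are jointly unsatisfiable whenever $p \notin \overline{U}$. The root problem is that $\varphi^{-m}$ contracts $X^u(p)$ toward $p$, which is the wrong attracting point; you need a contraction whose attracting fixed point lies inside $U$ itself.

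This is precisely what the paper's proof arranges. It pulls $U$ back by $\varphi^{-n}$ until it meets $X^u(p,\epsilon_p)$, uses the bracket to fatten $\varphi^{-n}(U) \cap X^u(p,\epsilon_p)$ into an open subset of $X$, chooses a periodic point $q$ of some period $N$ \emph{inside} that open set (using density of periodic points), and contracts $X^u(q,\epsilon)$ toward $q$ via $\varphi^{-mN}$; the bracket $[\,\cdot\,,p]$ then transports everything back into $X^u(p,\epsilon_p) \subseteq X^u(P)$, with Lemma \ref{lem:rect-nbhd-implies-lcu-lcs} guaranteeing that the resulting pairs are $\sim_\lcs$-related so the triples lie in the groupoid. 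A secondary gap: your strictness claim $r(\bar\Delta) \neq s(\Delta)$ is asserted rather than proved. Even with the correct base point, one needs the local unstable set to contain a point at definite positive distance from the center and then a sufficiently high power of the contraction so that the closure of the image misses that point; this is where the paper uses that $X$ is infinite and $(X,\varphi)$ is mixing to conclude that $X^u(q,\epsilon)$ is not discrete, and why the exponent $mN$ carries an adjustable factor $m \geq 1$ rather than being fixed equal to the period.
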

\begin{proof}
As with the previous result, the proof is essentially the same as one in \cite{putnam99} (in this case Proposition 5.7 of \cite{putnam99}). However, we include all the details for completeness and because of the use of the bracket in the proof in \cite{putnam99}, which requires further justification in the synchronizing system case. As usual with a synchronizing system, we use an adapted metric.

Take $\emptyset \neq U \subseteq X^u(P)$ open. We can assume that $U \subseteq X^u(p)$ for some $p\in P$. Using the fact that $p$ is periodic and synchronizing there exists $n\in \N$ such that 
\[
\varphi^{-n}(p)=p \hbox{ and }\varphi^{-n}(U) \cap X^u(p, \epsilon_p) \neq \emptyset 
\]
where $\epsilon_p>0$ is small enough so that $X^s(p, \epsilon_p) \times X^u(p, \epsilon_p)$ is a local product neighborhood.

Form the open subset of $X$ given by
\[
[ \varphi^{-n}(U) \cap X^u(p, \epsilon_p), X^s(p, \epsilon_p) ].
\]
Since periodic points are dense in $X$, there exists a periodic point $q$ in this open set. In particular, there exists $N\ge 1$ with $\varphi^N(q)=q$. Take $\epsilon>0$ small enough so that 
\[
X^u(q, \epsilon) \subseteq X^s(p, \epsilon_p) \times X^u(p, \epsilon_p) \hbox{ and }[X^u(q, \epsilon), p] \subseteq \varphi^{-n}(U) \cap X^u(p, \epsilon_p).
\]
Since $X$ is an infinite set and $(X, \varphi)$ is mixing, $X^u(q, \epsilon)$ is not discrete. It follows that there exists $m \geq 1$ such that 
\begin{equation} \label{contractingEquation}
\varphi^{-mN}(X^u(q, \epsilon)) \subseteq X^u(q, \lambda^{-mN}\epsilon) \hbox{ and }\overline{\varphi^{-mN}(X^u(q, \epsilon))} \neq X^u(q, \epsilon).
\end{equation}
In particular, if $y \in X^u(q, \epsilon)$, then $\varphi^{-mN}(y) \in X^u(q, \lambda^{-mN}\epsilon) \subseteq X^u(q, \epsilon)$. Furthermore, by the definition of the bracket, the fact that all points in $X^s(p, \epsilon_p) \times X^u(p, \epsilon_p)$ are synchronizing, and Lemma \ref{lem:rect-nbhd-implies-lcu-lcs}, we have that
\begin{enumerate}
\item $[ y , p ] \sim_{lcs} y$, 
\item $[ \varphi^{-mN}(y), p ] \sim_{lcs} \varphi^{-mN}(y)$, 
\item $[ y, p] \in X^u(p, \epsilon_p)$, and
\item $[\varphi^{-mN}(y), p] \in X^u(p, \epsilon_p)$.
\end{enumerate}
Define the set
\[
\Delta = \{ (\varphi^{-n}([\varphi^{-mN}(y), p]), mN, \varphi^{-n}([y, p]) ) \mid y \in X^u(q, \epsilon)  \}, 
\]
which we must show satisfies the conditions in the definition of locally contracting. 

Firstly, it follows from the four item list above that $\Delta \subseteq G^{lcs}(X, \varphi, P)\rtimes \Z$. In addition, the range and source are homeomorphisms onto their images because the bracket and $\varphi$ have this property. Hence $\Delta$ is a $G^{lcs}(X, \varphi, P)\rtimes \Z$-set.

Finally, we have that $r(\bar{\Delta}) \subseteq s(\Delta)$ and $r(\bar{\Delta}) \neq s(\Delta)$ using  
\[
r(\bar{\Delta})= \overline{\{ \varphi^{-n}([\varphi^{-mN}(y), p]) \mid y \in  X^u(q, \epsilon)  \}}
\]
and 
\[
s(\Delta)=\{ \varphi^{-n}([y, p])  \mid y \in X^u(q, \epsilon) \}
\]
and Equation (\ref{contractingEquation}) above. 

Thus the groupoid is locally contracting as required.
\end{proof}

\begin{corollary}
Suppose $(Y, f)$ is a mixing synchronizing system such that the conditions in \cite[Lemma 4.19]{DeeSto} hold and $G^{lcs}(X, \varphi, P)$ and $G^{lcu}(X, \varphi, P)$ are amenable. Then $C^*_r(G^{lcs}(X, \varphi, P) \rtimes \Z)$ and $C^*_r(G^{lcu}(X, \varphi, P) \rtimes \Z)$ are non-unital Kirchberg algebras (i.e., nuclear, simple, separable, and purely infinite).
\end{corollary}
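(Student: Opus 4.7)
The plan is to verify each of the four properties defining a non-unital Kirchberg algebra separately, together with non-unitality, by invoking the three preceding propositions in this section along with standard results relating properties of \'etale groupoids to those of their reduced $C^*$-algebras. The argument for $G^{lcs}(X, \varphi, P) \rtimes \Z$ and $G^{lcu}(X, \varphi, P) \rtimes \Z$ is symmetric, so I would treat only the stable case.

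First I would deduce nuclearity from item (3) of the first proposition in this section, which transfers amenability of $G^{lcs}(X, \varphi, P)$ to the crossed product $G^{lcs}(X, \varphi, P) \rtimes \Z$; amenability of a locally compact Hausdorff \'etale groupoid implies nuclearity of its reduced $C^*$-algebra by the Renault--Anantharaman-Delaroche theorem. Simplicity then follows from Renault's theorem: item (1) gives minimality (using the \cite[Lemma 4.19]{DeeSto} hypothesis), item (2) gives essential principality, and amenability was just established, and these together imply that $C^*_r$ of the groupoid is simple. Pure infiniteness follows from the second proposition, which shows that the crossed product is locally contracting, together with essential principality: the standard Anantharaman-Delaroche criterion then ensures that every non-zero hereditary subalgebra contains an infinite projection, and combined with simplicity this yields pure infiniteness in the Kirchberg sense.

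Separability and non-unitality I would verify directly. Since $X$ is compact metric and $P$ is finite, the unit space $X^u(P)$ with its inductive limit topology is second countable, hence so is the crossed product groupoid, and thus $C^*_r$ is separable. For non-unitality, the unit space decomposes as $X^u(P) = \bigcup_{N \geq 0} \bigcup_{p \in P} \varphi^{N}(X^u(\varphi^{-N}(p), \varepsilon))$, a strictly increasing countable union of compact sets in the mixing non-trivial case, hence it is locally compact but not compact, and so $C^*_r$ of the crossed product groupoid is non-unital.

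The main obstacle I anticipate is bookkeeping rather than analysis: one must match the precise hypotheses used in the standard simplicity and pure-infiniteness theorems (often stated in terms of topologically principal groupoids) with the essentially principal condition established in the first proposition, and confirm that the amenability provided by the proposition is what the Renault--Anantharaman-Delaroche formulation actually requires. Beyond this, the heavy lifting has already been done by the three preceding propositions, and no additional dynamical input should be needed.
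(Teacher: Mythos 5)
Your proposal is correct and follows essentially the same route as the paper: the paper's proof is a one-line appeal to the main result of Anantharaman-Delaroche together with the three preceding propositions establishing minimality, essential principality, and the locally contracting property, which is exactly the machinery you invoke (you simply spell out the nuclearity, simplicity, pure infiniteness, separability, and non-unitality steps that the paper leaves implicit).
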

\begin{proof}
This follows from the main result of \cite{AnaDel} (in light of our previous work in this section showing that the relevant groupoids are minimal, essential principle, and locally contracting).
\end{proof}

\section{Main results} \label{sec:Main}

\subsection{Groupoid results}

The main result of the paper is Theorem \ref{thm:main}. After it is proved, we will discuss the $C^\ast$-algebraic implications of it. We begin with a few lemmas.

\begin{lemma} \label{lem:one-to-one-Xu(P)}
Suppose $(X, \varphi)$ is a Smale space, $(Y, f)$ is a finitely presented system, $\pi: X \rightarrow Y$ is an almost one-to-one u-resolving map, $P \subseteq Y$ is a finite set of synchronizing periodic points such that $\pi^{-1}(p)$ is a single point for each $p\in P$, and $Q=\pi^{-1}(P)$. Then for each $y\in Y^u(P)$, the set $\pi^{-1}(y)=\{ x\}$ for some $x\in X^u(Q)$. 
\end{lemma}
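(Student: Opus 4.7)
The plan is to split the proof into two claims: (i) every preimage of $y$ lies in $X^\text{u}(Q)$, and (ii) $\pi^{-1}(y)$ has at most one element. Claim (ii) follows quickly from (i) together with the u-resolving hypothesis and Lemma \ref{lem:periodic-h-implies-equals}: if $x, x' \in \pi^{-1}(y)$ lie in $X^\text{u}(x_p)$ and $X^\text{u}(x_{p'})$ respectively with $p, p' \in P$, then $y \in Y^\text{u}(p) \cap Y^\text{u}(p')$ forces $p \sim_\text{u} p'$, hence $p = p'$ by Lemma \ref{lem:periodic-h-implies-equals}; injectivity of $\pi$ on $X^\text{u}(x_p)$ then gives $x = x'$.

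For (i), I would fix $y \in Y^\text{u}(p)$ with $p \in P$ and let $x_p$ denote the unique preimage of $p$ in $X$. If $N_p$ is the $f$-period of $p$, then $\pi(\varphi^{N_p}(x_p)) = p$ together with $\pi^{-1}(p) = \{x_p\}$ forces $\varphi^{N_p}(x_p) = x_p$, so $x_p$ is periodic as well. Given any $x \in \pi^{-1}(y)$, along the subsequence $n_k = k N_p$ we have $f^{-n_k}(p) = p$ and hence $d(f^{-n_k}(y), p) \to 0$. A standard compactness argument (any subsequential limit of $\varphi^{-n_k}(x)$ in $X$ maps to $p$ under $\pi$, so equals $x_p$) yields $\varphi^{-n_k}(x) \to x_p$ in $X$.

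Next I would invoke the Smale space product structure at $x_p$: for $\epsilon$ small, the image $R_p$ of $X^\text{s}(x_p, \epsilon) \times X^\text{u}(x_p, \epsilon)$ under the bracket is an open neighborhood of $x_p$. For $k$ sufficiently large $\varphi^{-n_k}(x) \in R_p$, so $\varphi^{-n_k}(x) = [a, b]$ with $a \in X^\text{s}(x_p, \epsilon)$ and $b \in X^\text{u}(x_p, \epsilon)$. By definition of the bracket, $[a, b] \in X^\text{u}(b, \epsilon)$, so $\varphi^{-n_k}(x) \sim_\text{u} b \sim_\text{u} x_p$; applying $\varphi^{n_k}$, which preserves $\sim_\text{u}$, together with $\varphi^{n_k}(x_p) = x_p$ gives $x \in X^\text{u}(x_p) \subseteq X^\text{u}(Q)$, completing (i).

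The main technical step is the convergence $\varphi^{-n_k}(x) \to x_p$ in $X$: this is the only place where the hypothesis that $\pi$ is one-to-one over $P$ is used essentially, and it is what upgrades $\pi$-convergence in $Y$ to genuine convergence in $X$. Once this is in hand, everything else is a formal combination of the Smale space bracket, the u-resolving property, and Lemma \ref{lem:periodic-h-implies-equals}.
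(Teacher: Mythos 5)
Your proof is correct, and its skeleton matches the paper's own: both arguments turn on the compactness step showing that the backward orbit of any preimage $x$ of $y$ converges, along multiples of the period of $p$, to the unique preimage $x_p$ of $p$ (every subsequential limit maps to $p$ under $\pi$, hence equals $x_p$), and both close with the u-resolving property together with Lemma \ref{lem:periodic-h-implies-equals}. Where you genuinely diverge is in how you upgrade this convergence to $x \sim_\text{u} x_p$. The paper does it directly from the definition of unstable equivalence: it uses uniform continuity of the homeomorphism to interpolate between the multiples of the period and concludes that $d(\varphi^{-n}(x), \varphi^{-n}(x_p)) \to 0$ along the full sequence of iterates, an argument that uses nothing about $X$ beyond compactness and continuity (and which, incidentally, is written in the paper with forward iterates; your backward-iterate orientation is the one matching the definition of $X^\text{u}$). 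You instead invoke the Smale space local product structure at $x_p$: once $\varphi^{-n_k}(x)$ enters a product neighborhood and is written as $[a,b]$ with $b \in X^\text{u}(x_p,\epsilon)$, the bracket convention $[a,b] \in X^\text{u}(b,\epsilon)$ and the contraction axiom C2 give $\varphi^{-n_k}(x) \sim_\text{u} b \sim_\text{u} x_p$, and pushing forward by $\varphi^{n_k}$ (which fixes $x_p$ and preserves $\sim_\text{u}$) finishes. This is a legitimate and somewhat slicker finish given that $X$ is a Smale space; the trade-off is that it leans on the bracket and local product neighborhoods, whereas the paper's interpolation argument would survive with $X$ merely expansive. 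Both routes are sound.
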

\begin{remark}
It is worth noting that this result is stronger than the statement that $\pi|_{X^u(Q)}$ is one-to-one; the element $y$ has unique preimage with respect to the map $\pi : X \rightarrow Y$.
\end{remark}
\begin{proof}
Let $y\in Y^u(P)$. Then, by Lemma \ref{lem:periodic-h-implies-equals},  $y \in Y^u(p)$ for a unique $p\in P$. By assumption, there is unique $q\in X$ such that $\pi^{-1}(p)=\{ q \}$. Moreover, $q$ is a periodic point since $p$ is. Let $M>1$ be such that 
\[
\varphi^M(q)=q \hbox{ and }f^M(p)=p.
\]

Since $\pi$ is onto, $\pi^{-1}(y)$ is non-empty. Take $x\in \pi^{-1}(y)$ and form the sequence in $X$, $(\varphi^{M\cdot n}(x))_{n\in \N}$. Let $L$ be a limit point of this sequence. Then $\pi(L)=\lim_{k\rightarrow \infty} \pi( \varphi^{M\cdot n_k}(x))$ for some subsequence $(\varphi^{M\cdot n_k}(x))_{k\in \N}$. Using the definition of a factor map, $y\in Y^u(p)$, and $f^M(p)=p$, we get
\[
\pi(L)=\lim_{k\rightarrow \infty} \pi( \varphi^{M\cdot n_k}(x))=\lim_{k\rightarrow \infty} f^{M\cdot n_k}(\pi(x))=\lim_{k\rightarrow \infty}f^{M\cdot n_k}(y)=p.
\]
Hence $L=q$ because $\pi^{-1}(p)=\{q\}$. Using the fact that $X$ is compact and the fact that all limit points of the sequence are $q$, we have that $\lim_{n \rightarrow \infty} \varphi^{M\cdot n}(x)=q$. 

Next, we show $x \in X^u(q)$. Let $\epsilon>0$. Since $X$ is compact and $\varphi$ is continuous, $\varphi$ is uniformly continuous. Hence there exists $\delta>0$ such that 
\[
d(\varphi^r(a), \varphi^r(b))<\epsilon
\]
whenever $d(a, b)<\delta$ and $r=0, 1, \ldots, M-1$.

Since $\lim_{n \rightarrow \infty} \varphi^{M\cdot n}(x)=q$, there exists $N\in \N$ such that 
\[
d(\varphi^{M\cdot n}(x), q) < \delta \hbox{ for }n\ge N.
\]
Take $k \geq M \cdot N$. We have that
\[
d(\varphi^k(x), \varphi^k(q))=d(\varphi^{M\cdot l + r}(x), \varphi^{M\cdot l + r}(q))=d(\varphi^r(\varphi^{M\cdot l}(x)), \varphi^r(q))
\]
for some $l\ge N$ and $0\le r \le M-1$. Using our uniformly continuous condition above, we have that 
\[ d(\varphi^r(\varphi^{M\cdot l}(x)), \varphi^r(q))< \epsilon \]
because $d((\varphi^{M\cdot l}(x), q)< \delta$. Hence $x\in X^u(q)$.

Finally, since $\pi$ is u-resolving and $\emptyset \neq \pi^{-1}(y) \subseteq X^u(q)$, we have that $\pi^{-1}(y)$ is a single point in $X^u(q) \subseteq X^u(Q)$.
\end{proof}

\begin{lemma} \label{lem:closeImpliesStable}
Suppose $(X, \varphi)$ is a Smale space, $(Y, f)$ is a finitely presented system, $\pi: X \rightarrow Y$ is an almost one-to-one u-resolving map, $P \subseteq Y$ is a finite set of synchronizing periodic points such that $\pi^{-1}(p)$ is a single point for each $p\in P$, $Q=\pi^{-1}(P)$, $y_1$ and $y_2$ are in $Y^u(P)$, and $x_1$ and $x_2$ are the unique points in $X$ such that $\pi(x_1)=y_1$ and $\pi(x_2)=y_2$. If there exists $N\in \N$ such that
\begin{enumerate}
\item $f^N(y_2) \in Y^s(f^N(y_1), \epsilon_Y)$ and
\item $d_X(\varphi^N(x_1), \varphi^N(x_2)) < \epsilon_{\pi}$,
\end{enumerate}
then $x_1 \sim_s x_2$.
\end{lemma}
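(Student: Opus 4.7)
The plan is to exhibit $\varphi^N(x_2)$ as the Smale space bracket $[\varphi^N(x_1),\varphi^N(x_2)]$. Once this identification is made, $\varphi^N(x_2)$ lies in the local stable set $X^s(\varphi^N(x_1),\epsilon_X)$, and the Smale space contraction axiom C1 then gives $d(\varphi^{N+n}(x_1),\varphi^{N+n}(x_2)) \to 0$ as $n \to \infty$, establishing $x_1 \sim_s x_2$.

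To carry this out, I first set $z := [\varphi^N(x_1),\varphi^N(x_2)]$, which is defined by hypothesis (2) provided $\epsilon_\pi$ has been chosen (as we may) smaller than $\epsilon_X/2$. Since $d_X(\varphi^N(x_1),\varphi^N(x_2)) < \epsilon_\pi$, Lemma \ref{lem:EpsPi} applies and gives $\pi(z) = [f^N(y_1),f^N(y_2)]$ in $Y$. Hypothesis (1) says $f^N(y_2) \in Y^s(f^N(y_1),\epsilon_Y)$, so $f^N(y_2)$ already lies in $Y^s(f^N(y_1),\epsilon_Y) \cap Y^u(f^N(y_2),\epsilon_Y)$. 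By the uniqueness of this intersection (Lemma \ref{lem:bracket-unique-intersection}), the $Y$-bracket evaluates to $f^N(y_2) = \pi(\varphi^N(x_2))$. Hence $\pi(z) = \pi(\varphi^N(x_2))$.

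Next, both $z$ and $\varphi^N(x_2)$ lie in $X^u(\varphi^N(x_2))$: the former because $z \in X^u(\varphi^N(x_2),\epsilon_X)$ by the very definition of the bracket, and the latter trivially. Applying Lemma \ref{lem:one-to-one-Xu(P)} (or, more simply, the u-resolving hypothesis directly, since $\pi$ restricted to any unstable equivalence class is injective), we conclude $z = \varphi^N(x_2)$. Thus $\varphi^N(x_2) \in X^s(\varphi^N(x_1),\epsilon_X)$, and the C1 contraction axiom of Smale spaces then yields $x_1 \sim_s x_2$.

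The only real obstacle is bookkeeping the various expansiveness and bracket-domain constants $\epsilon_X$, $\epsilon_Y$, and $\epsilon_\pi$ so that every invocation of the bracket map and of Lemma \ref{lem:EpsPi} is legitimate. Since Lemma \ref{lem:EpsPi} gives us the freedom to shrink $\epsilon_\pi$ as needed, and hypothesis (1) directly provides the element that realizes the $Y$-bracket, this bookkeeping is routine rather than genuinely difficult.
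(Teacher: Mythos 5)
Your proposal is correct and follows essentially the same route as the paper: form the bracket $[\varphi^N(x_1),\varphi^N(x_2)]$, push it through $\pi$ via Lemma \ref{lem:EpsPi}, identify the $Y$-bracket with $f^N(y_2)$ using hypothesis (1), and conclude $[\varphi^N(x_1),\varphi^N(x_2)]=\varphi^N(x_2)$ from injectivity of $\pi$ on unstable sets (the paper invokes uniqueness of the preimage from Lemma \ref{lem:one-to-one-Xu(P)}, which is the same point). The only cosmetic difference is your explicit remark about shrinking $\epsilon_\pi$ below $\epsilon_X/2$, which the paper leaves implicit.
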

\begin{proof}
By assumption, we can form $[ \varphi^N(x_1), \varphi^N(x_2) ]$ and using items (1) and (2) in the statement have that
\begin{align*}
\pi( [ \varphi^N(x_1), \varphi^N(x_2) ])  & = [ \pi(\varphi^N(x_1)), \pi(\varphi^N(x_2))] \\
& = [ f^N(\pi(x_1)), f^N(\pi(x_2))] \\
& = [ f^N(y_1), f^N(y_2) ] \\
& = f^N(y_2) \\
& = \pi(\varphi^N(x_2)).
\end{align*}
Using the fact that $\varphi^N(x_2)$ is the unique preimage of $\pi(\varphi^N(x_2))$, we have that
\[ [ \varphi^N(x_1), \varphi^N(x_2) ] = \varphi^N(x_2). \]
So $\varphi^N(x_2) \in X^s(\varphi^N(x_1), \epsilon_{\pi})$ and in particular $x_1 \sim_s x_2$.
\end{proof}

\begin{lemma} \label{lem:locConIt}
Suppose $y_1 \sim_{lcs} y_2$ and $\gamma : Y^u(y_1, \epsilon_1) \rightarrow V$ is a stable local conjugacy. Then, for any $n\in \N$, $f^n \circ \gamma \circ f^{-n} : Y^u( f^n(y_1), \epsilon_1) \rightarrow f^n(V)$ is a stable local conjugacy between $f^n(y_1)$ and $f^n(y_2)$.
\end{lemma}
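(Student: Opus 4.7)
The plan is to verify the three defining properties of a stable local conjugacy directly for the candidate map $\tilde{\gamma} := f^n \circ \gamma \circ f^{-n}$, treating the expression $f^n(V)$ in the statement as shorthand for the image of $\tilde\gamma$.

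First I would verify that the composition is well-defined with the claimed domain. Using the identity
\[
f^{-n}\bigl(Y^u(f^n(y_1),\epsilon_1)\bigr) = \{w \in Y : d(f^{-k}(y_1), f^{-k}(w)) < \epsilon_1 \text{ for all } k \geq -n\},
\]
which is a rewriting of the characterization in Definition \ref{def:loc-stable-unstable-sets}, this set is contained in $Y^u(y_1,\epsilon_1)$ and is open in the inductive limit topology on $Y^u(y_1)$. Hence $\gamma$ may be applied to $f^{-n}(z)$ for each $z \in Y^u(f^n(y_1),\epsilon_1)$, landing in $V \subseteq Y^u(y_2)$, and then $f^n$ carries the result into $Y^u(f^n(y_2))$.

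Next I would check the base-point and homeomorphism conditions. The identity $\tilde\gamma(f^n(y_1)) = f^n(\gamma(y_1)) = f^n(y_2)$ is immediate. Since $\tilde\gamma$ is a composition of the homeomorphisms $f^{-n}$, $\gamma$ restricted to an open subset of its domain, and $f^n$, it is a homeomorphism onto its image. Its image $f^n\bigl(\gamma\bigl(f^{-n}(Y^u(f^n(y_1),\epsilon_1))\bigr)\bigr)$ is open in $Y^u(f^n(y_2))$ because $\gamma$ is open and $f^n$ is a homeomorphism; moreover, it contains $f^n(y_2)$.

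Finally I would verify the stable-conjugacy asymptotic condition. For each $z \in Y^u(f^n(y_1),\epsilon_1)$, set $w = f^{-n}(z) \in Y^u(y_1,\epsilon_1)$, so that $\tilde\gamma(z) = f^n(\gamma(w))$ and
\[
d(f^k(z), f^k(\tilde\gamma(z))) = d(f^{k+n}(w), f^{k+n}(\gamma(w))).
\]
Taking the supremum over $z$ (equivalently, over $w$ ranging in the subset $f^{-n}(Y^u(f^n(y_1),\epsilon_1)) \subseteq Y^u(y_1,\epsilon_1)$) gives an upper bound by $\sup_{w \in Y^u(y_1,\epsilon_1)} d(f^{k+n}(w), f^{k+n}(\gamma(w)))$, which tends to $0$ as $k \to \infty$ by hypothesis on $\gamma$. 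This is essentially the only computation in the proof, and the main (minor) obstacle is just keeping the bookkeeping of the local unstable sets straight in step one; there is no deep obstruction.
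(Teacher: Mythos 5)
Your proof is correct and follows essentially the same route as the paper's: establish the domain containment so the composition is well-defined, note it is a homeomorphism onto its image sending $f^n(y_1)$ to $f^n(y_2)$, and transfer the asymptotic estimate by the index shift $k \mapsto k+n$. The only cosmetic difference is that you justify $f^{-n}\bigl(Y^u(f^n(y_1),\epsilon_1)\bigr) \subseteq Y^u(y_1,\epsilon_1)$ via the set identity recorded in Definition \ref{def:loc-stable-unstable-sets}, whereas the paper derives it from the contraction property of the adapted metric together with an induction; both are valid.
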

\begin{proof}
Firstly, since $d(f^{-1}(w), f^{-1}(z)) \leq \lambda d(w,z)$ whenever $z\in Y^u(w, \epsilon_1)$ and $0<\lambda<1$, we have that
\[
f^{-1}(Y^u(f(y_1), \epsilon_1) \subseteq Y^u(y_1, \lambda\epsilon_1) \subseteq Y^u(y_1, \epsilon_1).
\]
An induction argument implies that 
\begin{equation} \label{Eq:Ind}
 f^{-n}(Y^u(f^n(y_1), \epsilon_1)) \subseteq Y^u(y_1, \epsilon_1) 
\end{equation}
and hence $f^n \circ \gamma \circ f^{-n}$ is well-defined. Moreover, it is a homeomorphism onto its image because $\gamma$ is and both $f$ and $f^{-1}$ are global homeomorphisms.

Since $\gamma : Y^u(y_1, \epsilon_1) \rightarrow V$ is a stable local conjugacy, we have that
\begin{equation} \label{Eq:KnoLim} \lim_{k \rightarrow \infty} {\rm sup}_{z \in Y^u(y_1, \epsilon_1)} d( f^k(z), f^k(\gamma(z)))=0.
\end{equation}
Now, if $w \in Y^u(f^n(y_1), \epsilon_1)$, then (see above in Equation (\ref{Eq:Ind})) $f^{-n}(w)\in Y^u(y_1, \epsilon_1)$. Moreover, for $w \in Y^u(f^n(y_1), \epsilon_1)$, we have that
\[
 d( f^k(w), (f^k\circ f^n\circ \gamma\circ f^{-n})(w))  = d( (f^k\circ f^n)(f^{-n}(w)), (f^k\circ f^n)( \gamma(f^{-n}(w)))). \]
Using this equality and the known limit in Equation (\ref{Eq:KnoLim}), we get  
\[  \lim_{k \rightarrow \infty} {\rm sup}_{w \in Y^u(f^n(y_1), \epsilon_1)}  d( f^k(w), (f^k\circ f^n\circ \gamma\circ f^{-n})(w)) =0. \]
Finally, 
\[ (f^n \circ \gamma \circ f^{-n})(f^n(y_1))=f^n (\gamma(y_1))=f^n(y_2) \]
which implies that $f^n \circ \gamma \circ f^{-n}$ is a stable local conjugacy from $f^n(y_1)$ to $f^n(y_2)$ as required.
\end{proof}

\begin{lemma} \label{lem:LSCforN}
Suppose that $y_1 \sim_{lcs} y_2$ with local stable conjugacy given by $\gamma$. Given $0<\epsilon\leq \epsilon_Y$, there exists $\delta>0$ and $N\in \N$ such that for any $n\ge N$ the local stable conjugacy 
\[  f^n \circ \gamma \circ f^{-n} : Y^u( f^n(y_1), \delta) \rightarrow Y^u(f^n(y_2), \epsilon) \]
from $f^n(y_1)$ to $f^n(y_2)$ is well-defined (i.e., the image is contained in $Y^u(f^n(y_2), \epsilon)$) and is given by the map
\[ z \in Y^u( f^n(y_1), \delta) \mapsto [z, f^n(y_2)]. \]
\end{lemma}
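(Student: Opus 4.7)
The plan is to identify $w := (f^n \circ \gamma \circ f^{-n})(z)$ with the bracket $[z, f^n(y_2)]$ for $n$ large, after which the containment in $Y^u(f^n(y_2), \epsilon)$ follows automatically from the definition of the bracket. By Lemma \ref{lem:bracket-unique-intersection}, it suffices to verify that $w$ lies in $Y^s(z, \epsilon_0) \cap Y^u(f^n(y_2), \epsilon_0)$ for some auxiliary $\epsilon_0 \leq \min(\epsilon, \epsilon_Y/2)$.

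Two uniform limits are available from $y_1 \sim_{lcs} y_2$: stable equivalence of basepoints gives $d(f^n(y_1), f^n(y_2)) \to 0$, and the defining sup condition of stable local conjugacy gives $\sup_{z' \in Y^u(y_1,\epsilon_1)} d(f^k(z'), f^k(\gamma(z'))) \to 0$. Fix $\epsilon_0$ small (below $\epsilon$, $\epsilon_Y/2$, and any adapted-metric threshold needed), pick $N$ so both limits are $< \epsilon_0$ for indices $\geq N$, and use continuity of $\gamma$ at $y_1$ to find $\delta_1 > 0$ with $\gamma(Y^u(y_1, \delta_1)) \subseteq Y^u(y_2, \epsilon_0)$. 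Set $\delta = \min(\delta_1, \epsilon_1, \epsilon_0)$. For $n \geq N$ and $z \in Y^u(f^n(y_1), \delta)$, adapted-metric contraction on the iterated preimage gives $z' := f^{-n}(z) \in Y^u(y_1, \lambda^n \delta) \subseteq Y^u(y_1, \delta)$, so $\gamma(z') \in Y^u(y_2, \epsilon_0)$.

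The stable side $w \in Y^s(z, \epsilon_0)$ is immediate from the sup bound: $d(f^k(w), f^k(z)) = d(f^{k+n}(\gamma(z')), f^{k+n}(z')) < \epsilon_0$ for $k \geq 0$. The main obstacle is $w \in Y^u(f^n(y_2), \epsilon_0)$, i.e., $d(f^{-k}(w), f^{-k}(f^n(y_2))) < \epsilon_0$ for all $k \geq 0$. For $k \geq n$, the substitution $f^{-k}(w) = f^{-(k-n)}(\gamma(z'))$ reduces the bound to $d(f^{-(k-n)}(\gamma(z')), f^{-(k-n)}(y_2)) < \epsilon_0$, which is immediate from $\gamma(z') \in Y^u(y_2, \epsilon_0)$. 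The delicate range is $0 \leq k < n$, where, with $j = n - k > 0$, the distance becomes $d(f^j(\gamma(z')), f^j(y_2))$; these are forward iterates of two points in the same unstable class of $y_2$. The plan here is to exploit the adapted-metric expansion rate $\lambda^{-1}$ on local unstable sets (a consequence of the backward contraction $d(f^{-1}(a), f^{-1}(b)) \leq \lambda d(a,b)$ on $Y^u(a, \eta)$), starting from the fact that $d(\gamma(z'), y_2)$ is of order $\lambda^n \delta$ (obtained from $z' \in Y^u(y_1, \lambda^n \delta)$ combined with the behavior of $\gamma$ near $y_1$ in the unstable topology). This yields $d(f^j(\gamma(z')), f^j(y_2)) \lesssim \lambda^{n-j} \delta \leq \delta \leq \epsilon_0$ for all $0 \leq j \leq n$. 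Once $w \in Y^s(z, \epsilon_0) \cap Y^u(f^n(y_2), \epsilon_0)$ is established, Lemma \ref{lem:bracket-unique-intersection} identifies $w = [z, f^n(y_2)]$, and the conclusion $w \in Y^u(f^n(y_2), \epsilon)$ follows from $\epsilon_0 \leq \epsilon$.
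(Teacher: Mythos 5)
Your overall reduction is the right one and is essentially the paper's: identify $w=(f^n\circ\gamma\circ f^{-n})(z)$ with $[z,f^n(y_2)]$ by placing $w$ in $Y^s(z,\epsilon_0)\cap Y^u(f^n(y_2),\epsilon_0)$, where the stable containment follows from the defining sup condition and the $k\ge n$ part of the unstable containment follows from $\gamma(z')\in Y^u(y_2,\epsilon_0)$. The gap is exactly in what you call the delicate range $0\le k<n$. Your estimate $d(f^j(\gamma(z')),f^j(y_2))\lesssim\lambda^{n-j}\delta$ rests on two facts that are not available. First, the adapted metric gives $d(f^{-1}(a),f^{-1}(b))\le\lambda\,d(a,b)$ for $b\in Y^u(a,\eta)$; read forwards this is the \emph{lower} bound $d(f(a),f(b))\ge\lambda^{-1}d(a,b)$ on expansion along unstable sets, not an upper bound. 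The only per-step upper bound on forward iteration is the Lipschitz constant $K$ of $f$, and nothing forces $K\le\lambda^{-1}$, so iterating from time $0$ to time $j$ yields only $K^j\lambda^n\delta$, which need not stay below $\epsilon_0$ when $K\lambda>1$. Second, the starting point $d(\gamma(z'),y_2)=O(\lambda^n\delta)$ presumes a Lipschitz-type modulus for $\gamma$ at $y_1$; since $\gamma$ is merely a homeomorphism, from $z'\in Y^u(y_1,\lambda^n\delta)$ you may conclude $\gamma(z')\in Y^u(y_2,\epsilon_0)$ for $\delta$ small, but not that $d(\gamma(z'),y_2)$ decays geometrically in $n$. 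Either defect alone breaks the claimed bound.

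The paper closes precisely this hole by induction on $n$ rather than by a decay estimate. The base case $n=N$ gets the unstable containment from Lemma \ref{lem:locConIt} together with the choice of $\delta$ via uniform continuity; in the step from $n$ to $n+1$, the terms with $k\ge 1$ are controlled by the containment $[f^{-1}(w),f^{n}(y_2)]\in Y^u(f^{n}(y_2),\epsilon)$ already established at level $n$, and the one remaining term $k=0$ is handled by uniform continuity of the bracket map: $d([w,f^{n+1}(y_2)],f^{n+1}(y_2))=d([w,f^{n+1}(y_2)],[f^{n+1}(y_1),f^{n+1}(y_2)])<\epsilon$ because $d(w,f^{n+1}(y_1))<\delta$ and $\delta$ was chosen from the modulus of continuity of $[\cdot,\cdot]$ (Lemma \ref{lem:DclosedBraCts}). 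You need some substitute for this inductive, bracket-continuity argument; the exponential control you invoke is not supplied by the adapted metric.
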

\begin{proof}
Since $f$ is uniformly continuous, there exists $\tilde{\epsilon}>0$ such that
\[ d(f(x), f(y)) < \epsilon_{Y} \hbox{ whenever }d(x,y)< \tilde{\epsilon}. \]
We can assume that $0< \epsilon < \tilde{\epsilon}$. By Lemma \ref{lem:DclosedBraCts} the bracket map is uniformly continuous, so there exists $\delta>0$ such that 
\[ d_Y([a, b], [c, d]) < \epsilon \hbox{ whenever }d_{Y\times Y}((a, b), (c, d))< \delta \]
where $D \subseteq Y \times Y$ is the domain of the bracket.

Since $y_1 \sim_{lcs} y_2$, there exists $N\in \N$ such that
\[ f^n(\gamma(z)) \in X^s(f^n(z), \epsilon) \hbox{ for all }n \ge N. \]
In particular, $f^{n}(y_2) \in X^s(f^n(y_1), \epsilon)$ for all $n \ge N$.

Using the previous lemma (i.e., Lemma \ref{lem:locConIt}) and the fact that $\gamma$, $f^N$, and $f^{-N}$ are each uniformly continuous there exists $\delta>0$ (depending on $N$) such that
\[
\tilde{\gamma}:= f^N \circ \gamma \circ f^{-N} : X^u(f^N(y_1), \delta) \rightarrow X^u(f^N(y_2), \epsilon)
\]
defines a local conjugacy from $f^N(y_1)$ to $f^N(y_2)$. We must show that $\tilde{\gamma}(z)=[z, f^N(y_2)]$. Firstly, $\tilde{\gamma}(z) \in X^u(f^N(y_2), \epsilon)$ by construction. But if $z\in X^u(f^N(y_1), \delta)$, then $f^{-N}(z) \in X^u(y_1, \delta)$ and hence 
$\tilde{\gamma}(z)=f^N\gamma(f^{-N}(z)) \in X^s(z, \epsilon)$. So 
\[ \tilde{\gamma}(z) \in X^s(z, \epsilon) \cap X^u(f^N(y_2), \epsilon) = [ z, f^N(y_2)]. \]
This completes the proof for the case $n=N$. 

We now consider the case $n=N+1$ and will show that the same $\delta>0$ as in the case $n=N$ works in this case. 

By the previous lemma, $f \circ \tilde{\gamma}\circ f^{-1} : Y^u(f^{N+1}(y_1), \delta) \rightarrow Y^u(f^{N+1}(y_2))$ is a well-defined local stable conjugacy. We need to show that the map is equal to $z \mapsto [z, f^{N+1}(y_2)]$ and that the codomain can be taken to be $Y^u(f^{N+1}(y_2), \epsilon)$. 

Firstly, for $k\ge 0$ and $w\in Y^u(f^{N+1}(y_1), \delta)$, we have that 
\begin{align*} 
d(f^k (f \circ \tilde{\gamma}\circ f^{-1})(w), f^k(w)) & = d(f^{k+1}( [f^{-1}(w), f^N(x_2)]), f^{k+1} (f^{-1}(w)))
& < \epsilon 
\end{align*}
since $[f^{-1}(w), f^N(x_2)]\in X^s(f^{-1}(w), \epsilon)$. Hence $(f \circ \tilde{\gamma}\circ f^{-1})(w) \in X^s(w, \epsilon)$.

Next, again for $k\ge 0$ and $w\in Y^u(f^{N+1}(y_1), \delta)$, we consider 
\[ d(f^{-k}(f([f^{-1}(w), f^N(y_2)])), f^{-k}(f^{N+1}(y_2))). \] 
For $k=0$, we have that 
\[ d( f([f^{-1}(w), f^N(y_2)]), f(f^N(y_2)))< \epsilon_Y \hbox{ since }d( [f^{-1}(w), f^N(y_2)], f^N(y_2))< \epsilon \]
where we have used the fact that $f$ is uniformly continuous (see the first line of the proof).

Now, for $k\geq 1$, 
\begin{align*} d(f^{-k}(f([f^{-1}(w), f^N(y_2)])), f^{-k}(f^{N+1}(y_2))) & =d(f^{-k+1}([f^{-1}(w), f^N(y_2)]), f^{-k+1}(f^{N}(y_2))) \\
& < \epsilon 
\end{align*}
since $[f^{-1}(w), f^N(y_2)] \in Y^u(f^{N}(y_2), \epsilon)$.

In summary, $\tilde{\gamma}(w) \in X^u(f^{N+1}(y_2), \epsilon_Y) \cap X^s(f^{N+1}(w), \epsilon)$. It follows that 
\[ \tilde{\gamma}(w)= [w, f^{N+1}(y_2)]. \]

Finally, 
\[
d([w, f^{N+1}(y_2)], f^{N+1}(y_2))=d([w, f^{N+1}(y_2)], [f^{N+1}(y_1), f^{N+1}(y_2)])< \epsilon
\]
since $d(w, f^{N+1}(y_1))< \delta$. Using our work above, we therefore have that
\[
d(f^{-k}(f([f^{-1}(w), f^N(y_2)])), f^{-k}(f^{N+1}(y_2)))< \epsilon \hbox{ for all }k\ge 0.
\] 
Hence, $\tilde{\gamma}(w)=[w, f^{N+1}(y_2)] \in X^u(f^{N+1}(y_2), \epsilon).$ 

An induction argument can be used to complete the proof for all $n\ge N$. We omit the details.
\end{proof}

\begin{theorem} \label{thm:main}
Suppose that $(Y, f)$ is a mixing finitely presented system, $(X, \varphi)$ (a mixing Smale space) and $\pi: X \rightarrow Y$ are the minimal u-resolving extension of $(Y, f)$, $P \subseteq Y$ is a finite set of synchronizing periodic points such that $\pi^{-1}(p)$ is a single point for each $p\in P$, and $Q=\pi^{-1}(P)$. Then $\pi \times \pi : G^s(X, \varphi, Q) \rightarrow G^\lcs(Y, f, P)$ is an isomorphism of groupoids. Moreover, if $P$ is invariant under $f$, then $\pi\times \pi$ is an equivariant isomorphism (with respect to the $\Z$-action on $G^\lcs(Y, f, P)$ giving by $f\times f$). Moreover, the result holds with $s$ and $u$ interchanged. In particular, in this case, $\pi \times \pi : G^u(X, \varphi, Q) \rightarrow G^\lcu(Y, f, P)$ is an equivariant isomorphism where $\pi$ is the minimal $s$-resolving extension.
\end{theorem}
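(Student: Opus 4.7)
The plan is to show $\Pi := \pi\times\pi$ restricts to a groupoid isomorphism $G^s(X,\varphi,Q) \to G^{\lcs}(Y,f,P)$. The version with $s$ and $u$ interchanged will follow by applying the same argument to the time-reversed systems $(X,\varphi^{-1})$ and $(Y,f^{-1})$, under which $u$-resolving becomes $s$-resolving and stable/unstable equivalences swap; equivariance (when $P$, and hence $Q$, is invariant) is immediate from $\pi\circ\varphi = f\circ\pi$ once the isomorphism is established.

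For well-definedness, I will take $(x_1,x_2)\in G^s(X,\varphi,Q)$; functoriality gives $\pi(x_i)\in Y^u(P)$. Since $X$ is Smale and $x_1\sim_s x_2$, for $N$ large enough $\varphi^N(x_2)\in X^s(\varphi^N(x_1),\eta)$ with $\eta$ as small as desired, so $w\mapsto [w,\varphi^N(x_2)]$ is a stable local conjugacy from $\varphi^N(x_1)$ to $\varphi^N(x_2)$ on a small $X^u$-neighborhood. By Lemma~\ref{lem:EpsPi}, $\pi$ intertwines the brackets on small scales, so the pushforward $z\mapsto [z,f^N(y_2)]$ will be a stable local conjugacy from $f^N(y_1)$ to $f^N(y_2)$ in $Y$; conjugating by $f^{-N}$ yields $y_1\sim_{\lcs} y_2$. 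Injectivity is then immediate from Lemma~\ref{lem:one-to-one-Xu(P)}. For the topological identification, both groupoids are \'etale with topologies generated by graphs of (local bracket) stable local conjugacies, and the construction above will match these generating families bijectively under $\Pi$; composition and inversion are preserved coordinatewise because $\pi$ acts coordinatewise.

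The heart of the argument is surjectivity. Given $(y_1,y_2)\in G^{\lcs}(Y,f,P)$, let $x_i\in X^u(Q)$ be the unique $\pi$-preimages; the target is $x_1\sim_s x_2$ in $X$. I plan to verify the two hypotheses of Lemma~\ref{lem:closeImpliesStable}. The first, $f^N(y_2)\in Y^s(f^N(y_1),\epsilon_Y)$ for large $N$, is provided by Lemma~\ref{lem:LSCforN}. The second, $d(\varphi^N(x_1),\varphi^N(x_2))<\epsilon_\pi$ for the same $N$, I plan to establish by contradiction: if the distance remains at least $\epsilon_\pi$ for all $n$, compactness of $X\times X$ extracts a subsequence $n_k$ with $(\varphi^{n_k}(x_1),\varphi^{n_k}(x_2))\to(a,b)$, $a\neq b$, while continuity of $\pi$ together with $d(f^n(y_1),f^n(y_2))\to 0$ forces $\pi(a)=\pi(b)$. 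To produce the contradiction from $u$-resolvingness, I will refine the subsequence so that each $\varphi^{n_k}(q)$ (for $q\in Q$) converges to a periodic point, lift the bracket description of $\gamma_{n_k}$ from Lemma~\ref{lem:LSCforN} to $X$ using Lemma~\ref{lem:EpsPi} and the local product structure at the limiting periodic points, and conclude that $a\sim_u b$; $u$-resolvingness will then force $a=b$.

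The main obstacle is this last compactness-plus-limit step. The subtle point is that the limit $\pi(a)=\pi(b)$ need not lie in $Y^u(P)$, so Lemma~\ref{lem:one-to-one-Xu(P)} cannot be applied at the limit; the contradiction must instead be extracted from $u$-resolvingness applied to the pair $(a,b)$ by transferring the limiting bracket identity from $Y$ to $X$, which requires a careful diagonal subsequence together with the local product structure at the limiting periodic points. This is precisely the technical difficulty that the symbolic-dynamics proof in the sofic case treated in \cite{DeeStoShift} bypassed.
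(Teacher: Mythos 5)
Your reduction of the problem --- handling well-definedness, injectivity, and the topology first (the paper cites \cite[Lemma 6.8]{DeeSto} for exactly this, which is essentially your bracket-pushforward argument), then reducing surjectivity to the two hypotheses of Lemma \ref{lem:closeImpliesStable}, with the first supplied by Lemma \ref{lem:LSCforN} and the second attacked by compactness, producing limit points $a \neq b$ of the forward orbits with $\pi(a)=\pi(b)$ --- matches the paper's proof up to that point. The gap is in the mechanism you propose for the final contradiction. You plan to ``conclude that $a \sim_u b$'' by lifting the bracket description of the conjugacy to $X$ and then invoke $u$-resolvingness to force $a=b$. Nothing in the setup supports $a \sim_u b$: the points $a$ and $b$ are limit points of \emph{forward} orbits of two points that are only stably related downstairs, and unstable equivalence of the limits is a backward-asymptotic statement that the data does not provide. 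More tellingly, your route would use only that $\pi$ is $u$-resolving and almost one-to-one, never that it is the \emph{minimal} $u$-resolving extension; the remark following the theorem states explicitly that minimality is used at precisely this point, so an argument that avoids it cannot be correct as stated.

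What the paper actually does when $a_1 \neq a_2$ is invoke Fisher's Lemma 4.15: for a minimal $u$-resolving extension, two distinct points in the same fiber have distinct images of their global unstable sets, so there is $y' \in Y^u(y)$ with a preimage $a' \in X^u(a_1)$ but with $y' \notin \pi(X^u(a_2))$. The contradiction is then extracted by a careful transfer: shift everything back by $\varphi^{-m_L}$ so that $y$ is close to a limit point $\bar y$ whose fiber $\pi^{-1}(\bar y)$ is covered by finitely many disjoint $\epsilon$-balls; bracket $a'$ against $\varphi^{n_k}(x_1)$ and push through $\chi = f^{n_k}\circ\gamma\circ f^{-n_k}$ (in the bracket form given by Lemma \ref{lem:LSCforN}); use uniqueness of preimages in $Y^u(P)$ (Lemma \ref{lem:one-to-one-Xu(P)}) together with the disjoint-ball decomposition and Lemma \ref{lem:EpsPi} to pin down which sheet the resulting preimage $c$ lies in, namely the one containing $a_2$; and finally compute $\pi([c,a_2]) = y'$ with $[c,a_2] \in X^u(a_2)$, contradicting $y' \notin \pi(X^u(a_2))$. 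This structural use of minimality is the missing idea in your proposal, and without it the surjectivity argument does not close.
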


\begin{remark}
The proof is rather involved. More direct approaches are hindered by the fact that although $(\pi|_{X^u(Q)})^{-1}: Y^u(P) \rightarrow X^u(P)$ is a homeomorphism, there is no reason that it respects the metrics on $X$ and $Y$. On the other hand, $\pi|_{X^u(Q)}$ does respect these metrics (see for example the proof of \cite[Lemma 6.5]{DeeSto}). More specifically, if $(\pi|_{X^u(Q)})^{-1}: Y^u(P) \rightarrow X^u(P)$ was uniformly continuous, then one could use \cite[Proposition 4.2]{thomsen2010c} in the current proof. Unfortunately, there is no reason to believe that $(\pi|_{X^u(Q)})^{-1}$ is uniformly continuous.

It is worth noting explicitly where the condition that $\pi: X \rightarrow Y$ is the {\bf minimal} $u$-resolving extension is used in the proof. This occurs in the proof when we state that since $a_1 \neq a_2$, there exists $y^{\prime} \in Y^u(y)$ such that 
\begin{enumerate}
\item there exists $a^{\prime} \in X^u(a_1)$ such that $\pi(a^{\prime})=y^{\prime}$ and
\item $y^{\prime} \not\in X^u(a_2)$.
\end{enumerate}
The interested reader can see Lemma 4.15 in \cite{fisher13} for why the minimal condition implies this property. In fact, one can show that this property (along with $u$-resolving, almost one-to-one, etc) implies that $\pi$ is the minimal $u$-resolving extension. This fact is implicit in the proof of Theorem 1.2 in \cite{fisher13} (but we do not need it for our proof, so we will not expand further on this line of thought).

When $(Y, f)$ is a sofic shift then we proved the presented result in \cite[Theorem 4.6]{DeeStoShift}. The reader will notice that the proof in the general finitely presented case is much more involved than the sofic shift case.
\end{remark}

\begin{proof}
Before starting the proof, we note that $\pi$ is almost one-to-one, finite-to-one, and uniformly continuous. We can and will assume that the metrics on $X$ and $Y$ satisfy the properties in Section \ref{sec:MetExpSys}. Also, given $x\in X$ and $r>0$, let
\[ X(x, r)=\{ w\in X \mid d_X(w,x)<r \}.
\]
We use similar notation for open balls in $Y$.

By \cite[Lemma 6,8]{DeeSto}, the map $\pi \times \pi:  G^s(X, \varphi, Q) \rightarrow G^\lcs(Y, f, P)$ is a continuous open inclusion, so we need only show it is onto. 

Let $(y_1, y_2) \in G^\lcs(Y, f, P)$ and $\gamma: U \rightarrow V$ be a stable local conjugacy from $y_1$ to $y_2$ where $U$ and $V$ are open sets in $X^u(Q)$ that contain $y_1$ and $y_2$ respectively.

By Lemma \ref{lem:one-to-one-Xu(P)}, $\pi^{-1}(y_i)=\{ x_i \}$ for unique $x_i \in X^u(Q)$ with $i=1, 2$. We must show that $x_1 \sim_s x_2$. 

Since $X$ is compact, by passing to subsequences, we can assume that 
\[ \lim_{k\rightarrow \infty}\varphi^{n_k}(x_i)=a_i \hbox{ for some }a_i \in X \hbox{ with }i=1, 2.\] 
If $a_1=a_2$, then there exists $k \in \N$ such that
\begin{enumerate}
\item $f^{n_k}(y_2) \in Y^s(f^{n_k}(y_1), \epsilon_Y)$ and
\item $d_X(\varphi^{n_k}(x_1), \varphi^{n_k}(x_2)) < \epsilon_{\pi}$,
\end{enumerate}
where for the first item we have used the fact that $y_1 \sim_{lcs} y_2$. These two properties are exactly the ones in the statement of Lemma \ref{lem:closeImpliesStable}. Applying this lemma gives $x_1 \sim_s x_2$. 

So we are done, unless $a_1 \neq a_2$. Using the fact that $\pi$ is a factor map, we have that 
\[
\lim_{k\rightarrow \infty}f^{n_k}(y_1)=\pi(a_1) \hbox{ and }\lim_{k\rightarrow \infty}f^{n_k}(y_2)=\pi(a_2)
\]
However, $y_1 \sim_{lcs} y_2$, so $\lim_{n\rightarrow \infty} d(f^n(y_1), f^n(y_2)) =0$. Hence, $\pi(a_1)=\pi(a_2)$; we denote this common value by $y\in Y$. 

Again, using the fact that $Y$ is compact, there is a subsequence $f^{-m_l}(y)$ converging to some $\bar{y}\in Y$. Since $\pi$ is finite-to-one, we have that $\pi^{-1}(\bar{y})=\{ \bar{x}_1, \cdots, \bar{x}_{\bar{N}} \}$ for some $\bar{N}\in \N$ and $\bar{x}_1, \cdots, \bar{x}_{\bar{N}}$ in $X$. Take $\epsilon>0$ such that 
\begin{enumerate}
\item $\epsilon<\frac{\epsilon_{\pi}}{2}<\epsilon_X$ where $\epsilon_{\pi}$ is as in Lemma \ref{lem:EpsPi},
\item the collection $\{ X(\bar{x}_i, \epsilon)  \}_{i=1}^{\bar{N}}$ is pairwise disjoint, and
\item $\epsilon< \epsilon_Y$
\end{enumerate}

By \cite[Lemma 2.5.9]{MR3243636}, there exists $\eta>0$ such that
\[
\pi^{-1}(Y(\bar{y}, \eta)) \subseteq \cup_{i=1}^{\bar{N}} X(\bar{x}_i, \epsilon)
\]
where we note that, by our assumption on $\epsilon$, this is a disjoint union. By Lemma \ref{lem:LSCforN}, there exists $\delta_{\gamma}>0$ and $N\in \N$ such that for any $n\ge N$ the local conjugacy 
\[  f^n \circ \gamma \circ f^{-n} : Y^u( f^n(y_1), \delta_{\gamma}) \rightarrow Y^u(f^n(y_2), \epsilon) \]
from $f^n(y_1)$ to $f^n(y_2)$ is well-defined (i.e., the image is contained in $Y^u(f^n(y_2), \epsilon)$) and is given by the map
\[ z \in Y^u( f^n(y_1), \delta_{\gamma}) \mapsto [z, f^n(y_2)]. \]
By taking the minimum, we can assume that $\eta \le \delta_{\gamma}$.

Since $\pi$ is uniformly continuous, there exists $0<\delta< \epsilon$ such that
\[
d_Y(\pi(w), \pi(z))< \frac{\eta}{3} \hbox{ whenever }d_X(w,z)< \delta.
\]
Since $a_1 \neq a_2$ by \cite[Lemma 4.15]{fisher13}, there exists $y^{\prime} \in Y^u(y)$ such that 
\begin{enumerate}
\item there exists $a^{\prime} \in X^u(a_1)$ such that $\pi(a^{\prime})=y^{\prime}$ and
\item $y^{\prime} \not\in \pi(X^u(a_2))$.
\end{enumerate}
Using these two items and the fact that $f^{-m_l}(y)$ converges to $\bar{y}$, we can take $L$ large enough so that 
\begin{enumerate}
\item $f^{-m_{L}}(y) \in Y(\bar{y}, \frac{\eta}{3} )$, 
\item $f^{-m_{L}}(y^{\prime}) \in Y^u(f^{-m_{L}}(y), \frac{\eta}{3})$, and
\item $\varphi^{-m_L}(a^{\prime}) \in X^u\left(\varphi^{-m_L}(a), \frac{\delta}{2A_X}\right)$.
\end{enumerate}
where $A_X \ge 1$ is as in Lemma \ref{lem:AXconstant}.

To simplify notation, we replace $f^{-m_L}(y)$ with $y$, $f^{-m_L}(y^{\prime})$ with $y^{\prime}$, $\varphi^{-m_L}(a^{\prime})$ with $a^{\prime}$, and $\varphi^{-m_{L}}(a_i)$ with $a_i$ for $i=1,2$. Also, we can still assume that $\varphi^{n_k}(x_i)$ converges to $a_i$ (by taking $k$ large and shifting the index by $-m_L$). 

In summary, after this change of notation, we have that
\begin{enumerate}
\item[(a)] $y \in Y(\bar{y}, \frac{\eta}{3} )$, 
\item[(b)] $y^{\prime} \in Y^u(y, \frac{\eta}{3})$, and
\item[(c)] $a^{\prime} \in X^u(a, \frac{\delta}{2A_X})$.
\end{enumerate}
Take $k$ large enough, so that
\begin{enumerate}
\item[(I)] $n_k \ge N$,
\item[(II)] $d_X(\varphi^{n_k}(x_i), a_i)< \frac{\delta}{2A_X}$ for $i=1, 2$,
\item[(III)] $f^{n_k}(y_1) \in Y^s(f^{n_k}(y_2), \frac{\eta}{3})$, and
\item[(IV)] $f^{n_k}(\gamma(z)) \in Y^s(f^{n_k}(z),  \frac{\eta}{3})$ for all $z \in Y^u(y_1, \eta)$.
\end{enumerate}
We note that by Lemma \ref{lem:LSCforN} and the fact that $n_k\ge N$, the map 
\[ \chi:=f^{n_k} \circ \gamma \circ f^{-n_k}: Y^u(f^{n_k}(y_1), \eta) \rightarrow Y^u(f^{n_k}(y_2), \epsilon)
\] 
is a stable local conjugacy from $f^{n_k}(y_1)$ to $f^{n_k}(y_2)$.

Next
\begin{align*}
d(\varphi^{n_k}(x_1), a^{\prime}) & \le   d(\varphi^{n_k}(x_1), a_1) + d(a_1, a^{\prime}) \\
& <  \left( \frac{\delta}{2A_X} + \frac{\delta}{2A_X}\right) \\
& < \frac{\delta}{A_X} \\
& < \delta \hbox{ since }A_X\ge 1 \\
& < \epsilon.
\end{align*}
So we can form $[a^{\prime}, \varphi^{n_k}(x_1)]$. Moreover, using Lemma \ref{lem:AXconstant},
\[
d_X(\varphi^{n_k}(x_1), [a^{\prime}, \varphi^{n_k}(x_1)]) \le A_X d(\varphi^{n_k}(x_1), a^{\prime}) < \delta.
\]
Hence $[a^{\prime}, \varphi^{n_k}(x_1)] \in X^u(\varphi^{n_k}(x_1), \delta)$ and by our choice of $\delta$, 
\[
\pi( [a^{\prime}, \varphi^{n_k}(x_1)] ) \in Y^u\left(f^{n_k}(y_1), \frac{\eta}{3}\right).
\]
So we can apply $\chi$ to this element. Since $\pi$ is a factor map and using Lemma \ref{lem:EpsPi} (which can be applied since $d(\varphi^{n_k}(x_1), a^{\prime})< \epsilon < \epsilon_{\pi}$),
\[
\chi(\pi( [a^{\prime}, \varphi^{n_k}(x_1)] ))=f^{n_k} ( \gamma (f^{-n_k}([ y^{\prime}, f^{n_k}(y_1)])))
\]
Moreover, using item (IV) above with $z=f^{-n_k}([ y^{\prime}, f^{n_k}(y_1)])$, we have that 
\[ \chi(\pi( [a^{\prime}, \varphi^{n_k}(x_1)] )) \in Y^s\left([ y^{\prime}, f^{n_k}(y_1)] , \frac{\eta}{3}\right).
\] 
The fact that 
\[
d_X(a^{\prime}, [a^{\prime}, \varphi^{n_k}(x_1)]) < \delta
\]
implies that
\[
d_Y(y^{\prime}, [y^{\prime}, f^{n_k}(y_1)])< \frac{\eta}{3}.
\]
Items (a) and (b) imply that $d_Y(y^{\prime}, y)< \frac{\eta}{3}$ and $d_Y(y, \bar{y})< \frac{\eta}{3}$. Using these three inequalities and the triangle inequality, we have that
\[ d_Y(\chi(\pi( [a^{\prime}, \varphi^{n_k}(x_1)], \bar{y}) < \eta.
\]
It is also worth noting that $\chi(\pi( [a^{\prime}, \varphi^{n_k}(x_1)])) \in Y^u(f^{n_k}(y_2), \epsilon) \subseteq Y^u(P)$.

Since $\chi(\pi( [a^{\prime}, \varphi^{n_k}(x_1)])) \in Y^u(P)$, it has unique preimage under $\pi$ (see Lemma \ref{lem:one-to-one-Xu(P)}). Furthermore, since $\chi(\pi( [a^{\prime}, \varphi^{n_k}(x_1)]))\in Y(\bar{y}, \eta)$, we have that the unique preimage of $ \chi(\pi( [a^{\prime}, \varphi^{n_k}(x_1)]))$ is in $\cup_{i=1}^{\bar{N}} X(\bar{x}_i, \epsilon)$. The sets in this union are pairwise disjoint, so the preimage is in exactly one of them. 

In addition, since $\pi(a_2)=y$ and $y \in Y(\bar{y}, \eta)$ (see item (a) above), we have that 
\[ a_2 \in \pi^{-1}(y) \subseteq \cup_{i=1}^{\bar{N}} X(\bar{x}_i, \epsilon).
\] 
By reordering we can assume that $a_2 \in X(\bar{x}_2, \epsilon)$ (and $a_2 \not\in X(\bar{x}_j, \epsilon)$ for $j\neq 2$). Using item (II) above, 
\[
d_X(\varphi^{n_k}(x_2), \bar{x}_2) \le d_X(\varphi^{n_k}(x_2), a_2) + d_X(a_2, \bar{x}_2) < \frac{\delta}{2A_X} + \epsilon < \epsilon_{\pi} 
\] 
leading to $\varphi^{n_k}(x_2)\in X(\bar{x}_2, \epsilon_{\pi})$. 

Let $i$ be the unique index such that 
\[ 
c=\pi^{-1}( \chi(\pi( [a^{\prime}, \varphi^{n_k}(x_1)]))) \in X(\bar{x}_i, \epsilon).
\] 
In particular, $[\bar{x}_i, c ]$ is well-defined and Lemma \ref{lem:EpsPi} can be used. Applying $\pi$ to $[\bar{x}_i, c]$, we get
\begin{align*}
\pi ( [ \bar{x}_i, c ]) & = [ \pi(\bar{x}_i, \pi(c)] \hbox{ by Lemma \ref{lem:EpsPi}} \\
& = [ \bar{y}, \pi(c) ] \\
& = [ \bar{y}, f^{n_k}(y_2) ] \hbox{ since }\pi(c) \in X^u(f^{n_k}(y_2), \epsilon) \\
& = [ \pi(\bar{x}_2), \pi(\varphi^{n_k}(x_2))] \\
& = \pi ([ \bar{x}_2, \varphi^{n_k}(x_2)])  
\end{align*}
where to get the last line, we have applied Lemma \ref{lem:EpsPi} (which can be used because $\varphi^{n_k}(x_2)\in X(\bar{x}_2, \epsilon_{\pi})$).
Since $\pi$ is one-to-one on $X^u(P)$, it follows that $[ \bar{x}_i, c ]=[ \bar{x}_2, \varphi^{n_k}(x_2) ]$. We know that $[ \bar{x}_2, \varphi^{n_k}(x_2) ] \in X(\bar{x}_2, \epsilon)$ and $[ \bar{x}_i, c ] \in X(\bar{x}_i, \epsilon)$, so we must have that $i=2$. 

Hence $c\in X(\bar{x}_2, \epsilon)$ and we can form $[ c, a_2]$. Moreover, 
\[ d_X(c, a_2)\le d_X(c, \bar{x}_2) + d_X(\bar{x}_2, a_2)< 2 \epsilon < \epsilon_{\pi},
\] 
so Lemma \ref{lem:EpsPi} can be applied. We have that
\begin{align*}
\pi([c, a_2]) & = [ \pi(c), \pi(a_2) ] \hbox{ by Lemma \ref{lem:EpsPi}} \\
& = [ \chi(\pi( [a^{\prime}, \varphi^{n_k}(x_1)])), y ] \\
& = [ \pi(a^{\prime}), y ] \hbox{ since }\chi(\pi( [a^{\prime}, \varphi^{n_k}(x_1)]))\in X^s(\pi(a^{\prime}), \epsilon_Y) \\
& = [ y^{\prime}, y] \\
& = y^{\prime} \hbox{ by item (b) above.}
\end{align*}
However, $[c, a_2] \in X^u(a_2)$ and $y^{\prime} \not\in \pi(X^u(a_2))$. This is a contradiction.

\end{proof}

\subsection{$C^\ast$-algebraic results}
Using the main result of the previous section (i.e., Theorem \ref{thm:main}) we obtain a number of $C^*$-algebraic results. 
\begin{corollary}
Suppose $(Y, f)$ is a mixing finitely presented system with $\pi_s : (X_s, \varphi_s) \rightarrow (Y, f)$ its minimal $s$-resolving extension and $\pi_u : (X_u, \varphi_u) \rightarrow (Y, f)$ its minimal $u$-resolving extension (notice that both $(X_s, \varphi_s)$ and $(X_u, \varphi_u)$ are mixing Smale spaces). Moreover, let $P_s \subseteq Y$ be a finite set of synchronizing periodic points such that $\pi_s^{-1}(p)$ is a single point for each $p\in P_s$, and $Q_s=\pi^{-1}(P_s)$ and likewise with $u$ in place of $s$. Then 
\[
C^*(G^s(X_u, \varphi_u, Q_u)) \cong C^*(G^\lcs(Y, f, P_u)) \hbox{ and } C^*(G^u(X_s, \varphi_s, Q_s)) \cong C^*(G^\lcu(Y, f, P_u)).
\]
Moreover, if $P_s$ and $P_u$ are invariant under $f$, then
\begin{align*}
C^*(G^s(X_u, \varphi_u, Q_u)) \rtimes \Z & \cong C^*(G^\lcs(Y, f, P_u))\rtimes \Z  \hbox{ and } \\
C^*(G^u(X_s, \varphi_s, Q_s))\rtimes \Z & \cong C^*(G^\lcu(Y, f, P_s))\rtimes \Z.
\end{align*}
\end{corollary}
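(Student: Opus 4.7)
The plan is to deduce this corollary directly from Theorem \ref{thm:main} together with the functoriality of the reduced groupoid $C^*$-algebra construction. I would first apply Theorem \ref{thm:main} to the minimal $u$-resolving extension $\pi_u : (X_u, \varphi_u) \to (Y, f)$ with the distinguished set $P_u$; this yields an isomorphism of étale groupoids
\[
\pi_u \times \pi_u : G^s(X_u, \varphi_u, Q_u) \xrightarrow{\ \cong\ } G^\lcs(Y, f, P_u).
\]
Symmetrically, I would apply the ``$s$/$u$ interchanged'' part of Theorem \ref{thm:main} to the minimal $s$-resolving extension $\pi_s$ and the set $P_s$ to obtain
\[
\pi_s \times \pi_s : G^u(X_s, \varphi_s, Q_s) \xrightarrow{\ \cong\ } G^\lcu(Y, f, P_s).
\]

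Next, I would invoke the standard fact that an isomorphism of locally compact Hausdorff étale groupoids induces an isomorphism of the associated reduced $C^*$-algebras (at the level of $C_c$ the map $f \mapsto f \circ (\pi \times \pi)^{-1}$ is a $*$-algebra isomorphism that preserves the reduced norm because it is implemented unitarily on the left regular representations indexed by unit fibers). This immediately gives the first two displayed isomorphisms in the corollary statement.

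For the crossed product statements, the additional hypothesis is that $P_s$ and $P_u$ are $f$-invariant; this is precisely what is needed to define the $\mathbb{Z}$-action on $G^\lcs(Y, f, P_u)$ (respectively $G^\lcu(Y, f, P_s)$) by $f \times f$ as in Section \ref{sec:Ruelle}, and analogously the $\mathbb{Z}$-action on $G^s(X_u, \varphi_u, Q_u)$ by $\varphi_u \times \varphi_u$ is well-defined since $Q_u = \pi_u^{-1}(P_u)$ is $\varphi_u$-invariant. Theorem \ref{thm:main} asserts that $\pi_u \times \pi_u$ is equivariant with respect to these $\mathbb{Z}$-actions, and likewise for $\pi_s \times \pi_s$. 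The equivariant isomorphism of groupoids then lifts to an isomorphism of the transformation groupoids $G \rtimes \mathbb{Z}$, which in turn yields the desired isomorphism of the reduced $C^*$-algebras via functoriality as above.

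There is no real obstacle here — the work has already been done in Theorem \ref{thm:main}. The only point requiring care is to check that the set-theoretic semidirect-product groupoids $G \rtimes \mathbb{Z}$ defined in Section \ref{sec:Ruelle} carry their natural étale topology compatibly, so that a $\mathbb{Z}$-equivariant topological isomorphism of the underlying groupoids yields a topological isomorphism of the transformation groupoids; this is routine from the explicit description $(x_1, n, x_2) \mapsto (\pi(x_1), n, \pi(x_2))$ and the fact that $\pi|_{X^u(Q_u)}$ (respectively $\pi|_{X^s(Q_s)}$) is a homeomorphism onto its image by Theorem \ref{thm:main}.
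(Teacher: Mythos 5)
Your proposal is correct and is essentially the paper's own proof: the paper simply states that the first and last displayed isomorphisms follow directly from Theorem \ref{thm:main}, with the equivariance of $\pi\times\pi$ supplying the crossed-product (``moreover'') part, which is exactly the argument you spell out in more detail. The extra care you take with functoriality of reduced groupoid $C^*$-algebras and the transformation groupoids is a reasonable elaboration of the same route, not a different one.
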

\begin{proof}
The first and last statements follow directly from the previous result where it is the fact that the isomorphism is equivariant that implies the ``moreover" part.
\end{proof}

Based on the previous theorem, properties such as finite nuclear dimension, real rank zero, stable rank one, etc that hold for the stable and unstable of a mixing Smale space also hold for the stable synchronizing heteroclinic algebra and unstable synchronizing heteroclinic algebra associated to a finitely presented system; the same is true for the crossed products by $\Z$. We can also obtain results about the synchronizing ideal and the homoclinic algebra. As a sample theorem, we have the following result:

\begin{corollary}
Suppose $(Y, f)$ is a mixing finitely presented system. Then 
\begin{enumerate}
\item the synchronizing ideal of $(Y, f)$ has finite nuclear dimension and has real rank zero.
\item If $(Y, f)$ has only finitely many non-synchronizing points, then the homoclinic algebra of $(Y, f)$ has finite nuclear dimension and real rank zero.
\end{enumerate}
\end{corollary}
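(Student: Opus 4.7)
The plan is to reduce both parts to known structural properties of $C^\ast$-algebras associated to mixing Smale spaces, using Theorem \ref{thm:main} as the transfer principle.

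For part (1), I would first observe that on the synchronizing locus $Y_\text{sync} \subseteq Y$ every point admits a local product structure, so by Lemma \ref{lem:rect-nbhd-implies-lcu-lcs} together with the definitions of $\sim_\lc$, $\sim_\lcs$, $\sim_\lcu$, the local conjugacy relation restricted to $Y_\text{sync}$ coincides with the intersection of stable and unstable local conjugacy there. Consequently $\mathcal{I}_\text{sync}(Y,f)$ is realized as the reduced $C^\ast$-algebra of the \'etale groupoid $G^\lc(Y,f)|_{Y_\text{sync}}$. Next I would use the minimal $u$-resolving cover $\pi_u:(X_u,\varphi_u)\to(Y,f)$: Lemma \ref{lem:one-to-one-Xu(P)} shows that over $Y^\text{u}(P_u)$ (and hence over an invariant neighborhood of the synchronizing locus) the map $\pi_u$ is a bijection, and by adapting the argument of Theorem \ref{thm:main} in both asymptotic directions simultaneously, one obtains an \'etale isomorphism of groupoids between $G^\text{h}(X_u,\varphi_u)$ restricted to $\pi_u^{-1}(Y_\text{sync})$ and $G^\lc(Y,f)|_{Y_\text{sync}}$. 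Passing to reduced $C^\ast$-algebras then identifies $\mathcal{I}_\text{sync}(Y,f)$ with an ideal of the Smale-space homoclinic algebra $A(X_u,\varphi_u)$ cut out by an open $\varphi_u$-invariant subset of the unit space. Since the homoclinic algebra of a mixing Smale space has finite nuclear dimension and real rank zero (by the work on Smale-space $C^\ast$-algebras already cited in the paper), and since both properties pass to ideals corresponding to open invariant sets, the conclusion follows.

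For part (2), I would consider the short exact sequence
\[ 0 \to \mathcal{I}_\text{sync}(Y,f) \to A(Y,f) \to A(Y,f)/\mathcal{I}_\text{sync}(Y,f) \to 0. \]
If $Y\setminus Y_\text{sync}$ is finite, then the quotient is supported on finitely many local-conjugacy classes of non-synchronizing points, and since each such class yields only finitely many groupoid arrows, the quotient is a finite-dimensional $C^\ast$-algebra. Combining this with part (1) and standard permanence results, namely that finite nuclear dimension is preserved under extensions with finite-dimensional (hence subhomogeneous) quotient by Winter--Zacharias, and that an extension of a real rank zero algebra by a real rank zero ideal with finite-dimensional quotient has real rank zero since projections in the quotient trivially lift, these structural properties transfer to $A(Y,f)$.

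The main obstacle is the groupoid isomorphism at the homoclinic level in part (1): Theorem \ref{thm:main} only handles the stable and unstable pieces separately, and a pair $(y_1,y_2)\in G^\lc(Y,f)|_{Y_\text{sync}}$ records both an $\lcs$ and an $\lcu$ local conjugacy whose lifts to the Smale space covers need not a priori be compatible. One has to show that once $y_1,y_2$ are synchronizing and the unique lifts to $X_u$ exist, the two asymptotic relations automatically combine to give a homoclinic relation on $X_u$; the key mechanism is a forward-and-backward-in-time application of Lemma \ref{lem:closeImpliesStable}, which gives stable equivalence of the lifts from the $\lcs$ datum and unstable equivalence from the $\lcu$ datum. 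A secondary technical point is the precise form in which finite nuclear dimension and real rank zero are known for the homoclinic algebra of a mixing Smale space; if the literature only treats the stable or unstable Ruelle algebras directly, one could alternatively work with the synchronizing ideal through its interaction with $S(Y,f,P)$ and $U(Y,f,P)$ via the previous corollary, identifying it with a hereditary subalgebra of something to which those Smale-space results apply.
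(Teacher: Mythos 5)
Your part (2) is essentially the paper's argument: the short exact sequence $0 \to \mathcal{I}_\text{sync}(Y,f) \to A(Y,f) \to \C^n \to 0$ with finite-dimensional quotient, finite nuclear dimension via \cite[Proposition 2.9]{WinterZac:dimnuc}, and real rank zero via the extension results of \cite{LinRor} using $K_1(\C^n)=0$. The problem is your part (1), on which part (2) depends.

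Your primary route for part (1) --- realizing $\mathcal{I}_\text{sync}(Y,f)$ as an ideal of the homoclinic algebra $A(X_u,\varphi_u)$ of a \emph{single} Smale space cover via a homoclinic-level groupoid isomorphism --- has a genuine gap that cannot be repaired. First, Lemma \ref{lem:one-to-one-Xu(P)} gives unique preimages only for points of $Y^\text{u}(P_u)$, which is a countable union of unstable sets, not ``an invariant neighborhood of the synchronizing locus''; over general synchronizing points $\pi_u$ is merely almost one-to-one. Second, and more fundamentally, the stable and unstable directions require \emph{different} covers: Theorem \ref{thm:main} identifies $G^\lcs(Y,f,P)$ with a groupoid over the $u$-resolving cover $X_u$ but identifies $G^\lcu(Y,f,P)$ with one over the $s$-resolving cover $X_s$, and these are genuinely different Smale spaces unless $(Y,f)$ is an almost Smale space (the sofic shift of Section \ref{sec:counterexample} shows the asymmetry is real). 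Since $\pi_u$ is not $s$-resolving, an $\lcu$ datum on $Y$ does not lift to unstable equivalence on $X_u$, so the ``forward-and-backward'' application of Lemma \ref{lem:closeImpliesStable} you propose does not close the gap. The paper instead uses the Morita equivalence $\mathcal{I}_\text{sync}(Y,f) \sim_{ME} C^*(G^\lcu(Y,f,P)) \otimes C^*(G^\lcs(Y,f,P))$ from \cite{DeeSto}, applies Theorem \ref{thm:main} to each tensor factor separately (with its own cover), and then invokes \cite{MR3766855, MR4069199} for the Smale space algebras; this is close to the fallback you mention only in passing at the end, and it is that argument, not the homoclinic-level isomorphism, that you should make precise.
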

\begin{proof}
Using the notation of the previous theorem, the synchronizing ideal of $(Y, f)$ is Morita equivalent to $C^*(G^s(X_u, \varphi_u, Q_u)) \otimes C^*(G^u(X_s, \varphi_s, Q_s))$ by \cite[Theorem 6.10 part (i)]{DeeSto} and then the results follows from known results about Smale space $C^\ast$-algebras, see in particular \cite{MR4069199, MR3766855}.

For the homoclinic algebra, using the fact that there are only finitely many non-synchronizing points (see \cite[Theorem 6.11]{DeeSto}) we have the following short exact sequence
\[
0\rightarrow \mathcal{I}_\text{sync}(Y, f) \rightarrow A(Y,f) \rightarrow \C^n \rightarrow 0
\]
where $\mathcal{I}_\text{sync}(Y, f)$ is the synchronizing ideal, $A(Y,f)$ is the homoclinic algebra, and $n\in \N$ is the number of non-synchronizing points. That $A(Y,f)$ has finite nuclear dimension follows since both $\mathcal{I}_\text{sync}(Y, f)$ and $\C^n$ have this property and \cite[Proposition 2.9]{WinterZac:dimnuc}. In a similar way, $A(Y,f)$ has real rank zero because $\mathcal{I}_\text{sync}(Y, f)$ and $\C^n$ do and $K_1(\C^n) \cong \{ 0 \}$, see \cite{LinRor}.
\end{proof}

\begin{corollary}
Suppose $(Y, f)$ is a mixing almost Smale space with $\pi : (X, \varphi) \rightarrow (Y, f)$ almost one-to-one, $s$-resolving, $u$-resolving where $(X, \varphi)$ is a Smale space. Moreover, let $P \subseteq Y$ is a finite set of synchronizing periodic points such that $\pi^{-1}(p)$ is a single point for each $p\in P$, and $Q=\pi^{-1}(P)$. Then 
\[
C^*(G^s(X, \varphi, Q)) \cong C^*(G^\lcs(Y, f, P)) \hbox{ and } C^*(G^u(X, \varphi, Q)) \cong C^*(G^\lcu(Y, f, P)).
\]
In addition, the synchronizing ideal of $(Y, f)$ is Morita equivalent to the homoclinic algebra of $(X, \varphi)$. Moreover, if $P$ is invariant under $f$, then
\begin{align*}
C^*(G^s(X, \varphi, Q)) \rtimes \Z & \cong C^*(G^\lcs(Y, f, P))\rtimes \Z \hbox{ and }  \\
C^*(G^u(X, \varphi, Q))\rtimes \Z & \cong C^*(G^\lcu(Y, f, P))\rtimes \Z.
\end{align*}
\end{corollary}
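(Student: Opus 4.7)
The plan is to derive this corollary directly from Theorem \ref{thm:main} together with the preceding corollary, using the key structural fact recorded earlier that if $(Y,f)$ is an irreducible almost Smale space and $\pi : (X,\varphi) \to (Y,f)$ is simultaneously $s$-resolving, $u$-resolving, and almost one-to-one, then $\pi$ is \emph{both} the minimal $s$-resolving extension \emph{and} the minimal $u$-resolving extension. Consequently, the hypotheses of Theorem \ref{thm:main} are met by the single map $\pi$ in both the $s$ and $u$ directions simultaneously. Applying Theorem \ref{thm:main} with the roles of $s$ and $u$ interchanged then yields the groupoid isomorphisms
\[
\pi\times\pi : G^{s}(X,\varphi,Q) \xrightarrow{\;\cong\;} G^{\lcs}(Y,f,P) \quad\text{and}\quad \pi\times\pi : G^{u}(X,\varphi,Q) \xrightarrow{\;\cong\;} G^{\lcu}(Y,f,P),
\]
and passing to reduced groupoid $C^\ast$-algebras gives the first pair of isomorphisms in the statement.

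For the crossed product assertions, I invoke the ``moreover'' clause of Theorem \ref{thm:main}: when $P$ is $f$-invariant, the groupoid isomorphism $\pi\times\pi$ intertwines the $\Z$-actions induced by $\varphi$ and $f$ respectively. Hence the induced map on the semidirect product groupoids
\[
G^{s}(X,\varphi,Q)\rtimes\Z \;\cong\; G^{\lcs}(Y,f,P)\rtimes\Z,
\]
and similarly in the unstable case, is an isomorphism of topological groupoids, and descends to an isomorphism of the associated reduced $C^\ast$-algebras. This is exactly how the corresponding crossed product statement in the previous corollary was obtained, now specialized to the single-cover setting.

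The Morita equivalence between the synchronizing ideal of $(Y,f)$ and the homoclinic algebra of $(X,\varphi)$ comes from two steps. First, apply the previous corollary with $(X_s,\varphi_s) = (X_u,\varphi_u) = (X,\varphi)$ and $Q_s = Q_u = Q$: this identifies the synchronizing ideal of $(Y,f)$, up to Morita equivalence, with $C^\ast(G^{s}(X,\varphi,Q)) \otimes C^\ast(G^{u}(X,\varphi,Q))$, i.e.\ with the external tensor product of the stable and unstable algebras of the Smale space $(X,\varphi)$. Second, appeal to the standard identification, in the Smale space setting, between the external tensor product of the stable and unstable algebras (after trivial stabilization) and the homoclinic algebra $A(X,\varphi)$; this is the Morita equivalence between the homoclinic groupoid and the fibre product of the stable and unstable asymptotic groupoids going back to Putnam and Putnam--Spielberg. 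Composing the two Morita equivalences yields the claim.

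The only step that is not pure bookkeeping is the Smale space identification of $S \otimes U$ with the homoclinic algebra up to Morita equivalence; this is the main obstacle in the sense that it is where one must cite external machinery rather than merely unwind definitions. Everything else — choosing the right application of Theorem \ref{thm:main}, checking equivariance when $P$ is $f$-invariant, and matching notation with the previous corollary — is formal.
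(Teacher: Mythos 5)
Your proposal is correct and follows essentially the same route as the paper: invoke Fisher's result that an almost one-to-one, simultaneously $s$- and $u$-resolving cover is both the minimal $s$-resolving and minimal $u$-resolving extension, deduce the groupoid and crossed-product isomorphisms from Theorem \ref{thm:main} and the preceding corollary, and obtain the Morita equivalence by chaining $I_{\mathrm{sync}} \sim_{ME} C^*(G^{\lcu}) \otimes C^*(G^{\lcs}) \cong C^*(G^{u}) \otimes C^*(G^{s}) \sim_{ME} A(X,\varphi)$. The paper's proof is exactly this three-step chain, with the same appeal to the Putnam--Spielberg identification of the tensor product of the stable and unstable algebras with the homoclinic algebra up to Morita equivalence.
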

\begin{proof}
The map $\pi: (X, \varphi) \rightarrow (Y, f)$ is both the minimal $s$-resolving extension and the minimal $u$-resolving cover for $(Y, f)$. The first and last statements then follow directly from the previous result. For the ``in addition" statement, we have
\begin{align*}
\mathcal{I}_\text{sync}(Y, f) &  \sim_{ME} C^*(G^\lcu(Y, f, P)) \otimes C^*(G^\lcs(Y, f, P)) \\
& \cong C^*(G^u(X, \varphi, Q)) \otimes C^*(G^s(X, \varphi, Q)) \\
& \sim_{ME} H(X, \varphi)
\end{align*}
where $\sim_{ME}$ denotes Morita equivalence, and we have used \cite[Theorem 3.1]{putnam_1996} and \cite[Theorem 6.1 part (i)]{DeeSto}.
\end{proof}

\end{document}